\pgfplotsset{compat=1.15}
\newtheorem{thm}{Theorem}[section]
\newtheorem*{thm*}{Theorem}
\newtheorem{cor}[thm]{Corollary}	
\newtheorem*{cor*}{Corollary}		
\newtheorem{lem}[thm]{Lemma}
\newtheorem{prop}[thm]{Proposition}
\newtheorem{theorem}{Theorem}
\renewcommand*{\thetheorem}{\Alph{theorem}}
\theoremstyle{definition}
\newtheorem{definition}[thm]{Definition} 	
\newtheorem*{rema}{Remark} 
\newtheorem*{definition*}{Definition}    	
\numberwithin{equation}{section}
\DeclareMathOperator{\dist}{dist}
\DeclareMathOperator{\vol}{vol}
\newcommand{\norm}[1]{\left\Vert#1\right\Vert}
\begin{document}



\title[RCSP]{The \textit{rival} coffee shop problem}


\author[J.~Casado]{Javier Casado$^{\ast}$}

\author[M.~Cuerno]{Manuel Cuerno$^{\ast\ast}$}


\thanks{$^*$Supported in part by the FPU Graduate Research Grant FPU20/01444, and by research grants  
	 MTM2017-‐85934-‐C3-‐2-‐P and PID2021-124195NB-C32
from the Ministerio de Econom\'ia y Competitividad de Espa\~{na} (MINECO)} 

\thanks{$^{\ast\ast}$Supported in part by the FPI Graduate Research Grant PRE2018-084109, and by research grants  
	 MTM2017-‐85934-‐C3-‐2-‐P and PID2021-124195NB-C32
from the Ministerio de Econom\'ia y Competitividad de Espa\~{na} (MINECO)}


\address[J.~Casado]{Department of Mathematics, Universidad Aut\'onoma de Madrid and ICMAT CSIC-UAM-UC3M, Spain}
\email{javier.casadoa@uam.es} 

\address[M.~Cuerno]{Department of Mathematics, Universidad Aut\'onoma de Madrid and ICMAT CSIC-UAM-UC3M, Spain}
\email{manuel.mellado@uam.es}


\date{\today}


\subjclass[2020]{49Q20, 28A33, 30L15, 49Q22}
\keywords{Wasserstein distance, Optimal transport, Signed measures, Signed Wasserstein distance}


\begin{abstract}

\textcolor{black}{In this paper, we will address a modification of the following optimization problem: given a positive integer $N$ and a compact Riemannian manifold X, the goal is to place a point $x_N\in X$ in such a way that the sequence $\{x_1,\dots,x_N\}\subset X$ is distributed as uniformly as possible, considering that $\{x_1,\dots,x_{N-1}\}\subset X$ already is. This can be thought as a way of placing coffee shops in a certain area one at a time in order to cover it optimally. So, following this modelization we will denote this problem as the coffee shop problem. This notion of optimal settlement is formalized in the context of optimal transport and Wasserstein distance. As a novel aspect, we introduce a new element to the problem: the presence of a rival brand, which competes against us by opening its own coffee shops.}  As our main tool, we use a variation of the Wasserstein distance (the Signed Wasserstein distance presented by Piccoli, Rossi and Tournus in \cite{piccoli}), that allows us to work with finite signed measures and fits our problem. We present different results \textcolor{black}{depending on} how fast the rival is able to grow. With the Signed Wasserstein distance, we are able to obtain similar inequalities to the ones produced by the canonical Wasserstein one.

\end{abstract}
\setcounter{tocdepth}{1}

\maketitle






\section{Introduction}

\color{black}

Consider the following optimization problem: let $X$ be a compact Riemannian manifold of dimension $d$ and $N$ a positive integer, the aim is to settle $x_N\in X$ such that the sequence $\{x_1,\dots,x_N\}\subset X$ is placed as uniformly distributed as possible, regarding that $\{x_1,\dots,x_{N-1}\}\subset X$ already is.  In real life, if $X$ were a city and we wanted to control $X$ with our brand of coffee shops (each store is modeled by $x_i$), the process would not involve simultaneous openings. Instead, each new store would be introduced one at a time, with careful consideration taking into account the available areas of the region, i.e., we pretend to uniformly cover $X$ at each step. For that reason, from now on, we will denote this problem as the \textit{coffee shop problem}.

\color{black}
\begin{figure}[ht]
\centering
\resizebox{10\columnwidth/28}{!}{%
\begin{tikzpicture}[scale=3]
  \draw (0,0) rectangle (2,2);
  \draw (1,0) -- (1,2);
  \draw (0,1) -- (2,1);
  \foreach \i/\j/\k in {1/1/3, 1/2/1, 2/1/4, 2/2/2} {
    \filldraw (\i-0.5,\j-0.5) circle (1pt) node[below right] {\Large $x_{\k}$};
    }
\end{tikzpicture}}
\caption{}
    \label{figure1}
\end{figure}
\color{black}
 
\color{black}
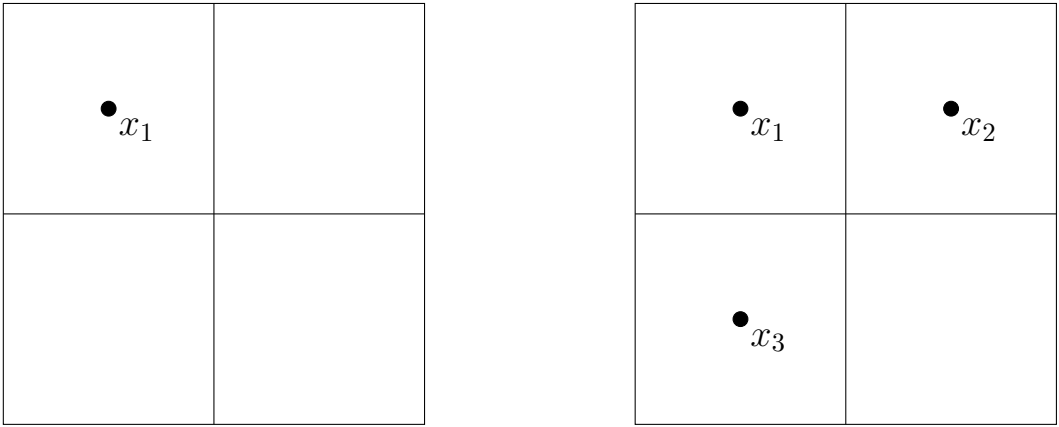
\begin{figure}[ht]
\centering
\begin{tikzpicture}[scale=2.8]
  \draw (0,0) rectangle (2,2);
  \draw (3,0) rectangle (5,2);
  \draw (4,0) -- (4,2);
  \draw (3,1) -- (5,1);
  \draw (1,0) -- (1,2);
  \draw (0,1) -- (2,1);
  \foreach \i/\j/\k in {4/1/3, 4/2/1, 1/2/1, 5/2/2} {
    \filldraw (\i-0.5,\j-0.5) circle (1pt) node[below right] {\Large $x_{\k}$};
    }
\end{tikzpicture}
\caption{On the left: one Coffee shop with the setup of Figure \ref{figure1}. On the right: three Coffee shops with the setup of Figure \ref{figure1} }
    \label{figure2}
\end{figure} 

\color{black}
This question deviates from the task of placing $\{x_1,\dots,x_N\}\subset X$ at once as uniformly distributed as possible. Figure \ref{figure1} shows an example with $N=4$ and $X=[0,1]\times[0,1]$ for this different problem. While examining the coffee shop problem, Figure \ref{figure1} appears distant from providing an accurate solution. As we consider the previous location of the sequence at each step, the configuration seems far from well distributed for $N=3$ or $N=1$ (Figure \ref{figure2}). Figure \ref{figure3} seems to provide a better solution to the coffee shop problem.

\color{black}
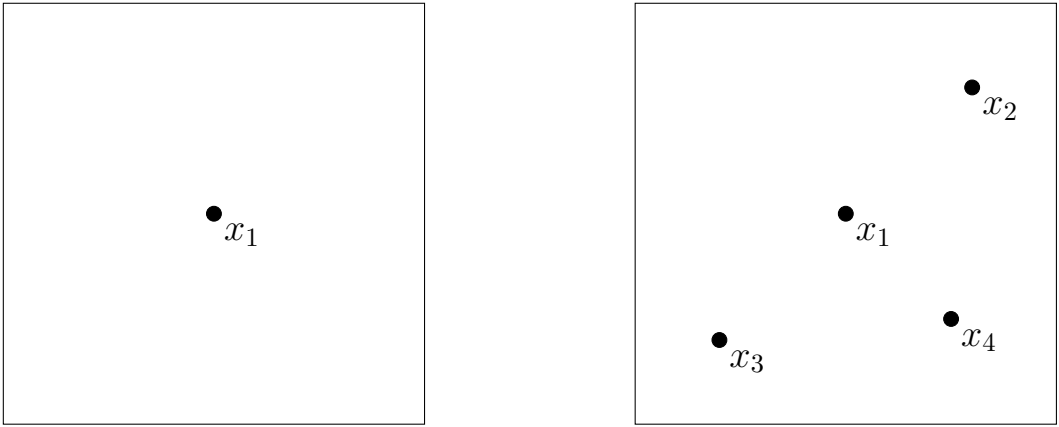
\begin{figure}[ht]
\centering
\begin{tikzpicture}[scale=2.8]
  \draw (0,0) rectangle (2,2);
  \draw (3,0) rectangle (5,2);
  \filldraw (1,1) circle (1pt) node [below right] {\Large $x_1$};
  \filldraw (4,1) circle (1pt) node[below right] {\Large $x_{1}$};
  \filldraw (4.6,1.6) circle (1pt) node[below right] {\Large $x_{2}$};
  \filldraw (3.4,0.4) circle (1pt) node[below right] {\Large $x_{3}$};
  \filldraw (4.5,0.5) circle (1pt) node[below right] {\Large $x_{4}$};
\end{tikzpicture}
\caption{On the left: the settlement of the first Coffee shop. On the right: a settlement for four Coffee shops that seems to fit better the coffee shop problem than Figure \ref{figure1}}
    \label{figure3}
\end{figure} 
\color{black}

A theoretical approach to these kind of questions about uniformly approximating regions by a discrete set of points is the one developed by the \textit{geometric discrepancy theory}. The interested reader can find more about that perspective in this survey \cite{reviewbilyk}. We also present here more references related this interesting research field \cite{blumlinger,slpmathematicalmodel,edward2,edward,edward3}. Also, we want to highlight that, from the probability theory perspective, some work has also been developed \cite{ambrosio,bolleyvillani,talagrand}. 

Although geometric discrepancy theory is a very fruitful area, in order to approach the coffee shop problem, we have decided to deal with it from the \textit{optimal transport} and Wasserstein space perspective. Optimal transport has shown its power to solve many different problems in a vast number of applied scenarios  \cite{santambrogio,villanitopics}.  We formally state the coffee shop problem as the problem of choosing $x_N\in X$ such that the following identity is satisfied:  

\begin{equation}{\label{2wasserstein}}
W_2\left(\frac1N\sum_{i=1}^{N}\delta_{x_i},dx\right)=\min_{x\in X}W_2\left(\frac{1}{N}\sum_{i=1}^{N-1}\delta_{x_i}+\frac1N\delta_{x},dx\right),
\end{equation}
where $W_2$ denotes the $2$--Wasserstein distance, $dx$ is the Riemannian volume measure of our space, normalized with $dx(X)=1$, $\{x_i\}_{i=1}^{N-1}\subset X$ is a finite subset, and $\delta_{x_i}$ denotes the Dirac measure at $x_i$. In this context, the Wasserstein distance models how close a uniformly distributed population is from a set of 
points, particularly, a set of coffee shops.

Louis Brown and Stephan Steinerberger dealt with the problem of distributing points evenly over a compact domain in  \cite{brownsteinerberger2,brownsteinerberger,steinerbergermanifold}. Particularly, they proved bounds on the cost of transporting Dirac measures supported on $\{x_1,\dots,x_N\}\subset X$ to the normalized measure $dx$. These valuable inequalities relate the size of $N$ with how close the points are to the uniform measure.  Brown and Steinerberger's work has assisted us in addressing our modification to the coffee shop problem. Here, we briefly present their results.

In \cite{steinerbergermanifold}, Steinerberger uses the heat kernel and the Green function to obtain the following result for any sequence: 

\begin{thm}[Steinerberger, 
{\cite[Theorem 1]{steinerbergermanifold}}]{\label{steinerbergermanifold}}
Let $X$ be a smooth, compact $d$--dimensional manifold without boundary. Then, for any set of $N$ points $\{x_1,...,x_N\}\subset X$, the following inequality holds\begin{equation}{\label{steinerbergerresult}}
W_2\left(\sum_{i=1}^{N}\frac{1}{N}\delta_{x_i},dx\right)\lesssim_X\frac{1}{N^{1/d}}+\frac1N\left|\sum_{k\neq l}G(x_k,x_l)\right|^{1/2},
\end{equation}where $G:X\times X\to\mathbb{R}\cup\{\infty\}$ denote the Green's function of the Laplacian normalized to have average value $0$ over the manifold and $d\geq3$.

If the manifold is two--dimensional, a slightly different inequality also holds \begin{equation}
W_2\left(\sum_{i=1}^{N}\frac{1}{N}\delta_{x_i},dx\right)\lesssim_X\frac{\sqrt{\log N}}{N^{1/2}}+\frac1N\left|\sum_{k\neq l}G(x_k,x_l)\right|^{1/2}.
\end{equation}
\end{thm}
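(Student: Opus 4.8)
The plan is to reduce the manifold statement to a local Euclidean computation combined with a spectral estimate. The key tool is the well-known link between the $2$--Wasserstein distance and the heat flow / Green function: for two probability measures $\mu,\nu$ that are absolutely continuous (or after a short-time mollification), one has the Benamou--Brenier / Loeper-type bound
\begin{equation*}
W_2(\mu,\nu)^2 \;\lesssim_X\; \norm{u_0 - u_1}_{\dot H^{-1}(X)}^2 + (\text{error from mollification}),
\end{equation*}
where $u_0,u_1$ are the densities and the $\dot H^{-1}$ norm is expressed through the Green function $G$ of the Laplacian. So first I would regularize the atomic measure $\frac1N\sum_i \delta_{x_i}$ by running the heat semigroup for a small time $t$, producing a smooth density $p_t$; the triangle inequality splits the cost into $W_2\bigl(\frac1N\sum_i\delta_{x_i},\,(\text{heated measure})\bigr)$ plus $W_2\bigl((\text{heated measure}),\,dx\bigr)$.

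For the first piece, each Dirac $\delta_{x_i}$ is moved to the heat kernel $p_t(x_i,\cdot)$, and on a compact manifold the heat kernel at time $t$ is concentrated at scale $\sqrt t$, so this contributes $O(\sqrt t)$ to $W_2$ (this is the source of the $N^{-1/d}$ term once $t$ is optimized against $N$). For the second piece, I would write the heated density minus $1$ in terms of the Green function: $p_t(x,\cdot) - 1 = -\int_0^\infty \partial_s\, (\text{heat at time } s\!+\!t)\,ds$ can be organized so that the $\dot H^{-1}$-norm of $\frac1N\sum_i p_t(x_i,\cdot) - 1$ is controlled, after expanding the square, by $\frac{1}{N^2}\sum_{k,l} G_t(x_k,x_l)$ where $G_t$ is a time-$t$-truncated Green function; the diagonal terms $k=l$ produce a divergence that is cut off at scale $\sqrt t$, again yielding a power of $t$, while the off-diagonal terms give $\frac1{N^2}\sum_{k\neq l} G(x_k,x_l)$ up to lower-order corrections. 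Taking square roots and choosing $t \sim N^{-2/d}$ balances the two contributions and yields \eqref{steinerbergerresult}; in dimension $d=2$ the diagonal heat-kernel singularity is logarithmic rather than a negative power, which is exactly what replaces $N^{-1/d}$ by $\sqrt{\log N}/N^{1/2}$.

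The main obstacle is making the Green-function bookkeeping rigorous: $G$ has a singularity along the diagonal (of order $\dist(x,y)^{2-d}$, or $\log$ in dimension $2$), so the expansion of $\bigl\|\frac1N\sum_i p_t(x_i,\cdot)-1\bigr\|_{\dot H^{-1}}^2$ into a double sum over $G$ requires carefully isolating the self-interaction terms and showing that the truncation at time $t$ turns each of them into a harmless $O(t^{1-d/2})$ (resp. $O(\log(1/t))$) quantity, rather than something that could conspire with the sign-indefinite off-diagonal sum. One must also verify that the implied constants genuinely depend only on $X$ (through its dimension, diameter, curvature and injectivity radius bounds, which enter the Gaussian heat-kernel estimates of Li--Yau type), and that the $\dot H^{-1}$-to-$W_2$ comparison is valid for the (smooth, positive) heated measures so that no further regularity of the $x_i$ is needed.
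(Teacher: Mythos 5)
This theorem is not proved in the paper at all: it is quoted verbatim as a tool from Steinerberger's work \cite[Theorem 1]{steinerbergermanifold}, so there is no in-paper argument to compare against. Your sketch is, in outline, faithful to Steinerberger's original proof — mollify $\frac1N\sum_i\delta_{x_i}$ by the heat semigroup, use $W_2(\mu,e^{t\Delta}\mu)\lesssim_X\sqrt{t}$ together with a Peyre-type bound $W_2(f\,dx,dx)\lesssim_X\|f-1\|_{\dot H^{-1}}$, expand the $\dot H^{-1}$ norm into a double sum over a time-truncated Green function, and optimize $t\sim N^{-2/d}$ (with the logarithmic diagonal singularity giving the $\sqrt{\log N}/N^{1/2}$ term when $d=2$) — and the obstacles you flag (isolating the $O(t^{1-d/2})$ self-interactions, controlling the sign-indefinite off-diagonal sum via positivity of the heat kernel, and constants via Li--Yau Gaussian bounds) are precisely the points the original paper handles, so your proposal is a correct reconstruction of the cited proof rather than a new route.
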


\begin{rema} 
    The symbol $\lesssim_X$ denotes the same relationship as $\le$, but omits multiplication by a constant on the right-hand side that specifically depends on the manifold $X$. 
    Steinerberger elaborates on this constant in \cite[Section 3]{steinerbergermanifold}. For a more detailed explanation, readers can also consult \cite{aronson,liyyau}.
\end{rema}

Later, Brown and Steinerberger eliminate the Green term in \eqref{steinerbergerresult} by defining the recursive sequence $x_N = \operatorname{arg min}_{x\in X} \sum_{k=1}^{N-1} G(x_k, x)$ \cite[Theorems 1 \& 3]{brownsteinerberger}. This is called a \textit{greedy sequence}. Additionally, in \cite{brownsteinerberger2}, they obtain similar bounds using other sequences, but on the $d$--dimensional torus $\mathbb{T}^d$.

In comparison to the original problem, we introduce competition in the region $X$. This modification seems natural as, in a city, different coffee brands compete for control over certain areas.
In this new scenario, the fundamental concept is to compare a new measure 
\begin{equation}\label{nuevamedida}
\mu=\frac{1}{N_1+N_2}\left[\sum_{i=1}^{N_1}\delta_{x_i}-\sum_{j=1}^{N_2}\delta_{y_j}\right].
\end{equation}
with respect to $dx$. Here, our shops are represented by the positive deltas, while rival shops are represented by the negative ones. A discussion about the choice of the constant multiplying the subtraction of the summations of Dirac deltas has been included in Appendix \ref{appendixone}. However, for the sake of brevity, we believe that $\frac{1}{N_1+N_2}$ is the one that best approximates the real situation we are considering as $N_1+N_2$ are the total number of stores in the region and it behaves as a normalization term in $\mu$.

Now, it is essential to note that $\mu$ is not a probability measure, and, in fact, it is not even positive. To address this, we opt for the \textit{Signed Wasserstein distance} introduced by Piccoli, Rossi, and Tournus in \cite{piccoli} and defined as follows:
\[
\mathbf{W}_1^{1,1} (\mu, \nu) := W_1^{1,1} (\mu_+ + \nu_- , \mu_- + \nu_+),
\]
where $\mu = \mu_+ - \mu_-$ denotes the Jordan decomposition of a signed measure, and $W_1^{1,1}$ is the \textit{generalized Wasserstein distance} between $\mu, \nu\in \mathcal M(X)$ given by
\[W_1^{1,1}(\mu,\nu) = \inf_{\substack{ \tilde{\mu}, \tilde{\nu}\in \mathcal M (X) \\ |\tilde{\mu}| = |\tilde{\nu}|}} |\mu-\tilde{\mu}| + |\nu- \tilde{\nu}| +  W_1(\tilde{\mu}, \tilde{\nu}) .\] 
Recall that $\mathcal M(X)$ is the space of positive finite measures supported in $X$.

The Signed Wasserstein distance is less restrictive than the canonical Wasserstein and accommodates signed finite measures like $\mu$. The monotonicity of the $1$-Wasserstein distance (that is, $W_1(\mu, \nu) \le W_2(\mu, \nu)$ by Hölder inequality), will allow us to apply Steinerberger and Brown's results. Employing the Signed Wasserstein distance, we can now effectively compare $\mu$ with $dx$, obtaining bounds and establishing a robust framework for our new problem.


To clarify our choice of measure $\mu$, let us consider the following: recognizing that competition negatively affects our Coffee Shop brand, we incorporate this factor by subtracting rivals ($\sum_{j=1}^{N_2}\delta_{y_j}$) from our stores in \eqref{2wasserstein}. Equation \eqref{2wasserstein} serves as a metric, measuring how closely the benefit from our Coffee Shop at ${x_i}$ aligns with the benefit derived from uniformly distributing stores across region $X$. Introducing competition requires subtracting the rival's benefit from ours, leading to a signed measure $\mu$. Moreover, in the spirit of the optimal transport problem, with this subtraction we measure how close a uniformly distributed population is form a set of points, regarding the rival brand is negatively affecting us. 


\color{black}

Thus, following the spirit of \eqref{2wasserstein}, the \textit{rival coffee shop problem} can be understood as the problem of choosing $x_N\in X$ such that the following identity is satisfied:

\begin{equation}{\label{newproblem}}
    \mathbf{W}_1^{1,1}\left(\mu,dx\right)=\min_{x\in X}\mathbf{W}_1^{1,1}\left(\frac{1}{N_1+N_2}\left[\sum_{i=1}^{N_1-1}\delta_{x_i}-\sum_{j=1}^{N_2}\delta_{y_j}\right]+\frac{1}{N_1+N_2}\delta_x,dx\right).
\end{equation}

In order to deal with the rival coffee shop problem, we have considered two different scenarios: fixed and dynamic competition.

\color{black}

In the fixed competition scenario, we assume that the number of rivals is settled at $N_2>0$, and we obtain the same bounds as Steinerberger and Brown \cite{brownsteinerberger2, brownsteinerberger,steinerbergermanifold}. First we replicate the bound for any set of points, depending on a Green function: 

\begingroup
\def\thetheorem{\ref{teoremauno}}
\begin{theorem}
Let $X$ be a smooth, compact $d$--dimensional manifold without boundary, $d\geq 3$, $G:X\times X\to\mathbb{R}\cup\{\infty\}$ denote the Green's function of the Laplacian normalized to have average value $0$ over the manifold and $N_1, N_2>0$. Then for any distinct sets of points $\{x_1,\dots,x_{N_1}\}$ and $\{y_1,\dots,y_{N_2}\}$ we obtain
    \[ \mathbf{W}_1^{1,1}(A,dx)
\lesssim_X\frac{2N_2}{N_1+N_2}+\frac{1}{(N_1+N_2)^{1/d}} + \frac{1}{N_1+N_2} \left| \sum_{k\neq \ell} G(z_k, z_\ell) \right| ^{1/2}.
\]
    where \[A = \frac{1}{N_1+N_2}\left[\sum_{i=1}^{N_1}\delta_{x_i}-\sum_{j=1}^{N_2}\delta_{y_j}\right]\] 
    and $z_i=x_i$ from $i=1$ to $N_1$ and $z_i=y_{i-N_1}$ for $i=N_1+1$ to $N_1+N_2$.
\end{theorem}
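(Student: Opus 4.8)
\textit{Proof plan.} The idea is to compare $A$ with the honest probability measure obtained by \emph{pretending the rival's shops are ours}, and then invoke Steinerberger's Theorem~\ref{steinerbergermanifold} for $N_1+N_2$ points, paying an extra error term for the negative part of $A$. Set
\[
B=\frac{1}{N_1+N_2}\sum_{i=1}^{N_1+N_2}\delta_{z_i}=\frac{1}{N_1+N_2}\Big[\sum_{i=1}^{N_1}\delta_{x_i}+\sum_{j=1}^{N_2}\delta_{y_j}\Big],
\]
a probability measure. Since $\mathbf{W}_1^{1,1}$ is a genuine distance on finite signed measures (Piccoli--Rossi--Tournus, \cite{piccoli}), the triangle inequality gives
\[
\mathbf{W}_1^{1,1}(A,dx)\le \mathbf{W}_1^{1,1}(A,B)+\mathbf{W}_1^{1,1}(B,dx),
\]
and it suffices to estimate the two summands separately.

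For the second summand, note $B$ and $dx$ are both positive (indeed probability) measures, so their Jordan decompositions are trivial and $\mathbf{W}_1^{1,1}(B,dx)=W_1^{1,1}(B,dx)$. Choosing $\tilde\mu=B$, $\tilde\nu=dx$ in the infimum defining $W_1^{1,1}$ (admissible since $|B|=|dx|=1$) kills the mass-change terms, so $W_1^{1,1}(B,dx)\le W_1(B,dx)$, and by Hölder $W_1(B,dx)\le W_2(B,dx)$. Applying Theorem~\ref{steinerbergermanifold} to the $N=N_1+N_2$ points $z_1,\dots,z_{N_1+N_2}$ then bounds $W_2(B,dx)$ by $\tfrac{1}{(N_1+N_2)^{1/d}}+\tfrac{1}{N_1+N_2}\big|\sum_{k\neq\ell}G(z_k,z_\ell)\big|^{1/2}$ up to the manifold constant.

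For the first summand, the hypothesis that the point sets are disjoint makes $A_+=\tfrac{1}{N_1+N_2}\sum_i\delta_{x_i}$ and $A_-=\tfrac{1}{N_1+N_2}\sum_j\delta_{y_j}$ the actual Jordan decomposition of $A$; since $B\ge 0$,
\[
\mathbf{W}_1^{1,1}(A,B)=W_1^{1,1}\big(A_+,\,A_-+B\big)=W_1^{1,1}\Big(A_+,\,A_++\tfrac{2}{N_1+N_2}\textstyle\sum_{j=1}^{N_2}\delta_{y_j}\Big).
\]
Taking $\tilde\mu=\tilde\nu=A_+$ in the infimum (admissible, equal masses) makes the $W_1$ term vanish and leaves only the mass-destruction term $\big|\tfrac{2}{N_1+N_2}\sum_j\delta_{y_j}\big|=\tfrac{2N_2}{N_1+N_2}$, whence $\mathbf{W}_1^{1,1}(A,B)\le \tfrac{2N_2}{N_1+N_2}$. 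Adding the two bounds yields the assertion.

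The computations are elementary; the points needing care are (i) invoking the metric/triangle-inequality properties of $\mathbf{W}_1^{1,1}$ from \cite{piccoli} on the relevant class of finite signed measures, (ii) verifying that each candidate pair $(\tilde\mu,\tilde\nu)$ used above satisfies $|\tilde\mu|=|\tilde\nu|$ so that it is admissible in the generalized Wasserstein infimum, and (iii) the disjointness hypothesis, which is exactly what guarantees $A_{\pm}$ is the Jordan decomposition (if a store and a rival coincided, their atoms would partially cancel, only improving the estimate). Conceptually, the factor $2$ in $\tfrac{2N_2}{N_1+N_2}$ records that each rival costs us twice: once for failing to be ours in $B$, and once more for being subtracted in $A$.
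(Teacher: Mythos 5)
Your proof is correct and follows essentially the same route as the paper: both compare $A$ to the positive measure $B=\frac{1}{N_1+N_2}\sum_i\delta_{z_i}$, pay $\frac{2N_2}{N_1+N_2}$ for flipping the rival deltas, and then bound $W_1(B,dx)\le W_2(B,dx)$ by Steinerberger's theorem applied to the $N_1+N_2$ points. The only cosmetic difference is that you split via the triangle inequality for $\mathbf{W}_1^{1,1}$ and bound each piece by a choice of $(\tilde\mu,\tilde\nu)$, whereas the paper obtains the same two terms in one step by taking $\tilde\mu=B$, $\tilde\nu=dx$ directly in the infimum defining $W_1^{1,1}(A_+,dx+A_-)$.
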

\addtocounter{theorem}{-1}
\endgroup

We have also achieved the same bound after choosing a \textit{greedy sequence} (the explanation of these sequeces is placed on Section \ref{sectionthree}) to get rid of the Green function factor:

\begingroup
\def\thetheorem{\ref{thm33}}
\begin{theorem}
Let $z_n$ be a greedy sequence on a
$d$--dimensional compact manifold with $d\geq 3$ and $\{x_1,\dots,x_{N_1}\}\subset\{z_i\}_{i=1}^{N_1+N_2}$ and $\{y_1,\dots,y_{N_2}\}\subset\{z_i\}_{i=1}^{N_1+N_2}$ such that $x_i\neq y_j$ for arbitrary $i$, $j$. Then \[
\mathbf{W}_1^{1,1}\left(\frac{1}{N_1+N_2}\left[\sum_{i=1}^{N_1}\delta_{x_i}-\sum_{j=1}^{N_2}\delta_{y_j}\right],dx\right) {\color{black} \lesssim_{X}\frac{2N_2}{N_1+N_2}+\frac{1}{(N_1+N_2)^{1/d}}.}
\]
\end{theorem}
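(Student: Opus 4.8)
The plan is to reduce the signed-measure estimate to the (unsigned) greedy-sequence bound of Brown and Steinerberger, applied to the full set $\{z_1,\dots,z_{N_1+N_2}\}$, and to control the ``sign correction'' by the triangle inequality. First I would write $A = \frac{1}{N_1+N_2}\sum_{i=1}^{N_1}\delta_{x_i}-\frac{1}{N_1+N_2}\sum_{j=1}^{N_2}\delta_{y_j}$ and introduce the auxiliary \emph{positive} measure $\nu := \frac{1}{N_1+N_2}\sum_{k=1}^{N_1+N_2}\delta_{z_k}$, i.e. the empirical measure of the first $N_1+N_2$ terms of the greedy sequence, with all signs made positive. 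Then, using that $\mathbf{W}_1^{1,1}$ is a genuine metric on finite signed measures (Piccoli--Rossi--Tournus, \cite{piccoli}), I would estimate
\[
\mathbf{W}_1^{1,1}(A,dx)\;\le\;\mathbf{W}_1^{1,1}(A,\nu)\;+\;\mathbf{W}_1^{1,1}(\nu,dx).
\]
The second term is handled directly: since $\nu$ is a probability measure, $\mathbf{W}_1^{1,1}(\nu,dx)=W_1(\nu,dx)\le W_2(\nu,dx)$, and the greedy-sequence theorem of Brown--Steinerberger (\cite[Theorems 1 \& 3]{brownsteinerberger}, quoted as the mechanism behind Theorem~\ref{teoremauno}) gives $W_2(\nu,dx)\lesssim_X (N_1+N_2)^{-1/d}$, since the Green term is killed along a greedy sequence.

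The first term is where the sign enters. The measures $A$ and $\nu$ differ only in the mass placed at the $y_j$: $A$ has $-\tfrac{1}{N_1+N_2}\delta_{y_j}$ where $\nu$ has $+\tfrac{1}{N_1+N_2}\delta_{y_j}$. Unwinding the definition of $\mathbf{W}_1^{1,1}$ via the generalized Wasserstein distance $W_1^{1,1}$, one may take the competing measures $\tilde\mu,\tilde\nu$ that simply delete all the $y_j$-mass on both sides; this costs $|\mu-\tilde\mu|+|\nu-\tilde\nu|$ at most $\tfrac{2N_2}{N_1+N_2}$ in total (removing $N_2$ atoms of mass $\tfrac{1}{N_1+N_2}$ from each side, counted with multiplicity including any coincidences $x_i=z_k=y_j$ — here the hypothesis $x_i\neq y_j$ guarantees we are genuinely deleting the $y$-part and nothing else), and the remaining transport cost $W_1(\tilde\mu,\tilde\nu)$ vanishes because $\tilde\mu=\tilde\nu$. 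Hence $\mathbf{W}_1^{1,1}(A,\nu)\le \tfrac{2N_2}{N_1+N_2}$. Combining the two bounds yields
\[
\mathbf{W}_1^{1,1}(A,dx)\;\lesssim_X\;\frac{2N_2}{N_1+N_2}+\frac{1}{(N_1+N_2)^{1/d}},
\]
which is the claimed inequality.

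The main obstacle — really the only delicate point — is the bookkeeping in the estimate $\mathbf{W}_1^{1,1}(A,\nu)\le \tfrac{2N_2}{N_1+N_2}$: one must check against the precise definition of $\mathbf{W}_1^{1,1}$ in terms of $W_1^{1,1}$ that deleting the negative part of $A$ and the matching positive atoms of $\nu$ is an admissible pair $(\tilde\mu,\tilde\nu)$ with $|\tilde\mu|=|\tilde\nu|$, and that the total mass removed is exactly $\tfrac{N_2}{N_1+N_2}$ on each side despite possible repetitions among the $z_k$. The disjointness hypothesis $x_i\neq y_j$ is exactly what makes this clean. Everything else is either the triangle inequality for $\mathbf{W}_1^{1,1}$, the monotonicity $W_1\le W_2$, or a verbatim application of the greedy-sequence bound already invoked for Theorem~\ref{teoremauno}.
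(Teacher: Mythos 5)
Your proof is correct and takes essentially the same approach as the paper: both compare $A$ with the full positive empirical measure $\nu=B=\frac{1}{N_1+N_2}\sum_{k=1}^{N_1+N_2}\delta_{z_k}$, pay $\frac{2N_2}{N_1+N_2}$ for flipping the negative deltas, and invoke the Brown--Steinerberger greedy bound $W_1(\nu,dx)\le W_2(\nu,dx)\lesssim_X (N_1+N_2)^{-1/d}$; the paper simply folds your two steps into a single choice $(\tilde\mu,\tilde\nu)=(B,dx)$ inside the definition of $W_1^{1,1}$ rather than splitting via the triangle inequality. One small caveat: in general only $\mathbf{W}_1^{1,1}(\nu,dx)\le W_1(\nu,dx)$ holds (mass removal can be cheaper than transport), not the equality you assert, but the inequality is all your argument uses.
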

\addtocounter{theorem}{-1}
\endgroup

We also establish a weak lower bound in the same spirit as the one obtained in \cite{brownsteinerberger2,brownsteinerberger,steinerbergermanifold}.

\begin{rema}
    As Steinerberger stated on \cite[page 4]{steinerbergermanifold}, Theorem \ref{steinerbergermanifold} is sharp for $d\geq 3$ and sharp up to possibly the factor of $\sqrt{\log n}$ in $d=2$. Moreover, he gave a brief explanation about that statement. 
    Because of that, we decided to present our results only for the case of $d\geq 3$ as our results should have the same behaviour as theirs for $d=2$.
\end{rema}

In the dynamic scenario, we consider three different cases:

\begin{enumerate}
    \item An area of $X$ is controlled by the rival.
    \item The rival's company grows faster than ours.
    \item The competition has the same growth rate as ours. 
    
\end{enumerate}

In the first case, we impose $A\subset X$ a subset where we cannot settle any Dirac delta. Then, we study in Proposition \ref{prop41} and Corollary \ref{cor42} how the brand whose stores are place around $X$ without restriction will have a winning strategy as they will have less $1$--Wasserstein distance against $dx$ than the other company.


\color{black} In the last two cases, we will work with a rival who is comparable to us. Then, the measure $\mu$ will most likely not approach $dx$ for $N\to \infty$, so it will make sense to simply consider which brand is closer to the uniform distribution. Now, the same way that $\mathbf{W}_1^{1,1}(\mu, dx)$ is a metric of how well distributed our shops are (the lower the better), $\mathbf{W}_1^{1,1}(-\mu, dx)$ measures how well distributed the rival shops are, because the minus sign exchanges the positive and the negative deltas. Thus, we will say that \textit{the rival shops will win} if they have a better position, that is,
\[
\mathbf{W}_1^{1,1}(-\mu, dx) < \mathbf{W}_1^{1,1}(\mu, dx).
\]
In that sense, we have obtained two interesting results:

\color{black}
\begingroup
\def\thetheorem{\ref{proposicioncon2N}}
\begin{theorem}
    {\color{black} Let f: $\mathbb{N} \to \mathbb{N}$ } and $\mu_N= \left(\sum_{i=1}^N \delta_{x_i} -\sum_{j=1}^{f(N)} \delta_{y_j}\right)$. If $f(N) \geq f(N-1) + 2$, then, for $N_0$ big enough, the rival shops will {\color{black} have a winning strategy for all $N\geq N_0$, i.e., they can choose a sequence such that}
    \begin{equation}
\mathbf{W}_1^{1,1} \left(    \frac{1}{N+f(N)}\mu_N,dx \right)  > \mathbf{W}_1^{1,1} \left( \frac{1}{N+f(N)}(-\mu_N), dx \right).
    \end{equation}
\end{theorem}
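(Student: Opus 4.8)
The plan is to let the rival commit, once and for all, to a \emph{greedy sequence} $(y_j)_{j\ge 1}$ on $X$ (any equidistributed sequence would serve equally well): at stage $N$ it has placed its shops at $y_1,\dots,y_{f(N)}$, while we place $x_1,\dots,x_N$ however we please. Abbreviating $M=N+f(N)$, I would reduce the statement to the two inequalities
\[
\mathbf{W}_1^{1,1}\!\left(\tfrac1M\mu_N,\,dx\right)\ \ge\ \frac{2f(N)}{M}
\qquad\text{and}\qquad
\mathbf{W}_1^{1,1}\!\left(\tfrac1M(-\mu_N),\,dx\right)\ \le\ \frac{2N}{M}+\frac{f(N)}{M}\,W_1\!\left(\tfrac1{f(N)}\textstyle\sum_{j=1}^{f(N)}\delta_{y_j},\,dx\right),
\]
after which it remains only to see that the first right-hand side exceeds the second for $N$ large.

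For the lower bound I would use nothing but total masses. From $\mathbf{W}_1^{1,1}(\mu,dx)=W_1^{1,1}(\mu_+,\mu_-+dx)$ and the elementary fact that the generalized Wasserstein distance dominates the difference of total masses (if $\tilde\alpha\le\alpha$ and $\tilde\beta\le\beta$ have common mass $m$, then $|\alpha-\tilde\alpha|+|\beta-\tilde\beta|=\alpha(X)+\beta(X)-2m\ge|\alpha(X)-\beta(X)|$), one obtains $\mathbf{W}_1^{1,1}(\mu,dx)\ge|\mu(X)-1|$ for every signed $\mu$. Since $\tfrac1M\mu_N$ has total mass $\tfrac{N-f(N)}{M}$, this gives $\mathbf{W}_1^{1,1}(\tfrac1M\mu_N,dx)\ge\bigl|\tfrac{N-f(N)}{M}-1\bigr|=\tfrac{2f(N)}{M}$, regardless of where our points and the rival's points actually lie.

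For the upper bound I would assume (as is harmless, and in keeping with the other theorems in this paper) that $x_i\ne y_j$ for all $i,j$, so that $(-\mu_N)_+=\sum_j\delta_{y_j}$ and $(-\mu_N)_-=\sum_i\delta_{x_i}$, whence $\mathbf{W}_1^{1,1}(\tfrac1M(-\mu_N),dx)=W_1^{1,1}\!\bigl(\tfrac1M\sum_j\delta_{y_j},\ \tfrac1M\sum_i\delta_{x_i}+dx\bigr)$. I would bound the right side straight from the definition of $W_1^{1,1}$, testing with the submeasures $\tilde\mu=\tfrac1M\sum_j\delta_{y_j}$ and $\tilde\nu=\tfrac{f(N)}{M}\,dx$; these have equal mass $\tfrac{f(N)}{M}$, one has $\tilde\nu\le dx\le\tfrac1M\sum_i\delta_{x_i}+dx$, and the cost evaluates to
\[
0\;+\;\Bigl(\tfrac1M\textstyle\sum_i\delta_{x_i}+dx-\tfrac{f(N)}{M}dx\Bigr)(X)\;+\;W_1\!\Bigl(\tfrac1M\textstyle\sum_j\delta_{y_j},\,\tfrac{f(N)}{M}dx\Bigr)\;=\;\frac{2N}{M}+\frac{f(N)}{M}\,W_1\!\Bigl(\tfrac1{f(N)}\textstyle\sum_j\delta_{y_j},\,dx\Bigr),
\]
using the $1$-homogeneity of $W_1$ under a common rescaling of both arguments. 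Since $(y_j)$ is greedy on a $d$-manifold with $d\ge3$, the Brown--Steinerberger estimate for greedy sequences combined with $W_1\le W_2$ yields $W_1(\tfrac1{f(N)}\sum_j\delta_{y_j},dx)\lesssim_X f(N)^{-1/d}$, and as $M\ge f(N)$ this gives $\mathbf{W}_1^{1,1}(\tfrac1M(-\mu_N),dx)\le\tfrac{2N}{M}+C_X f(N)^{-1/d}$.

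Finally, combining the two bounds, it is enough to check $\tfrac{2f(N)}{M}>\tfrac{2N}{M}+C_Xf(N)^{-1/d}$, i.e., $\tfrac{2(f(N)-N)}{N+f(N)}>C_Xf(N)^{-1/d}$. Iterating $f(N)\ge f(N-1)+2$ forces $f(N)\ge 2N-c$ for a constant $c$, so $f(N)\to\infty$; since $\tfrac{f(N)-N}{N+f(N)}=1-\tfrac{2N}{N+f(N)}$ is increasing in $f(N)$, it stays bounded below by a positive constant for all large $N$, while $C_Xf(N)^{-1/d}\to0$. Choosing $N_0$ beyond the crossover completes the proof. I expect the delicate point to be precisely the upper estimate: the test pair must annihilate our $N$ shops rather than transport them and must leave exactly the submeasure $\tfrac{f(N)}{M}dx$ of $dx$, so that the bound comes out as $\tfrac{2N}{M}+o(1)$; a looser choice would inflate this coefficient and could destroy the strict inequality in the slowest admissible regime $f(N)\sim 2N$, where the available margin $\tfrac{2(f(N)-N)}{N+f(N)}$ is only about $\tfrac23$.
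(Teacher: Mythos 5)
Your proposal is correct, and it reaches the conclusion by a noticeably different route than the paper's proof, even though both rest on the same two pillars: a position-independent lower bound by total masses (your inline mass estimate is exactly the paper's Lemma \ref{cotainferiorwassers}, which you could simply cite; note the infimum in $W_1^{1,1}$ is not restricted to submeasures, but the bound $|\alpha-\tilde\alpha|\ge|\alpha(X)-\tilde\alpha(X)|$ plus the triangle inequality closes that without the restriction) and the Brown--Steinerberger greedy estimate for the upper bound. The paper instead has the rival play a \emph{copying} strategy: $y_{f(n-1)+1}=x_n$ at every stage, with the remaining rival shops placed greedily, so that in $-\mu_N$ all our deltas cancel and one bounds $\mathbf{W}_1^{1,1}$ by testing with $\tilde{\mu}$ the normalized measure on the $f(N)-N$ filling shops and $\tilde{\nu}=dx$, getting $\tfrac{2N}{N+f(N)}+o(1)\le \tfrac23+o(1)$ against the mass lower bound $\tfrac{2f(N)}{N+f(N)}\ge 1$. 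Your rival is oblivious (a fixed greedy sequence, never looking at our moves), and the upper bound comes from the explicit test pair $\tilde\mu=\tfrac1M\sum_j\delta_{y_j}$, $\tilde\nu=\tfrac{f(N)}{M}dx$, which annihilates our deltas and the excess of $dx$ at cost $\tfrac{2N}{M}$ and transports the greedy cloud to a rescaled $dx$; this is where you correctly insist on the scaling $\tfrac{f(N)}{M}$, since a sloppier choice would eat the $\approx\tfrac23$ margin in the extremal regime $f(N)\sim 2N$. What your version buys: the hypothesis $f(N)\ge f(N-1)+2$ is used only through $f(N)\ge 2N-c$, so the argument immediately gives the paper's Corollary ($\liminf f(N)/N>1$) with the same computation and requires no turn-by-turn reaction from the rival; what the paper's version buys is a cleaner final comparison ($\le\tfrac23+o(1)$ versus $\ge 1$) because the cancellation turns $-\mu_N$ into a genuinely positive measure before any estimate is made.
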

\addtocounter{theorem}{-1}
\endgroup

\begingroup
\def\thetheorem{\ref{thm45}}
\begin{theorem}
  Let $\mu_N= \left(\sum_{i=1}^N \delta_{x_i} -\sum_{j=1}^{N+K} \delta_{y_j}\right)$, and $N_0>0$. Then, there exist values of $K$ such that the rival shops will have a winning strategy for all $N\le N_0$, i.e. 
    \begin{equation}
\mathbf{W}_1^{1,1} \left(    \frac{1}{2N+K}\mu_N,dx \right)  > \mathbf{W}_1^{1,1} \left( \frac{1}{2N+K}(-\mu_N), dx \right).
        \end{equation}
\end{theorem}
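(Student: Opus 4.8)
The guiding observation is that the normalization by $\tfrac1{2N+K}$, together with the fact that $\mu_N$ carries $N+K$ rival atoms against only $N$ of ours, makes the positive and negative parts of $\mu_N$ have very different total masses once $K$ is large, and this imbalance \emph{alone} keeps $\mathbf{W}_1^{1,1}\!\big(\tfrac1{2N+K}\mu_N,dx\big)$ bounded below by an amount close to $2$, independently of where the $x_i$ are placed; meanwhile the rival, owning $N+K$ shops, can spread them so well that $\mathbf{W}_1^{1,1}\!\big(\tfrac1{2N+K}(-\mu_N),dx\big)$ becomes arbitrarily small. So the plan is first to unwind both Signed Wasserstein distances via the Jordan decomposition: since $dx$ is positive this gives
\[ \mathbf{W}_1^{1,1}\!\Big(\tfrac1{2N+K}\mu_N,dx\Big)=W_1^{1,1}\!\Big(\tfrac1{2N+K}\textstyle\sum_{i=1}^N\delta_{x_i},\ \tfrac1{2N+K}\sum_{j=1}^{N+K}\delta_{y_j}+dx\Big), \]
and the analogous formula with the roles of the $x_i$ and $y_j$ exchanged for $-\mu_N$. (As in Theorems~\ref{teoremauno} and \ref{thm33} we take the $x_i,y_j$ pairwise distinct; coincidences only make the two bounds below stronger.)

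For the first distance I would invoke the elementary lower bound $W_1^{1,1}(\sigma,\tau)\ge \abs{\abs{\sigma}-\abs{\tau}}$, valid for all $\sigma,\tau\in\mathcal M(X)$ — it is immediate from the definition, since the total-variation terms $\abs{\sigma-\tilde\sigma}+\abs{\tau-\tilde\tau}$ dominate $\abs{\abs\sigma-\abs\tau}$ whenever $\abs{\tilde\sigma}=\abs{\tilde\tau}$ (reverse triangle inequality), and $W_1\ge0$. This yields $\mathbf{W}_1^{1,1}\!\big(\tfrac1{2N+K}\mu_N,dx\big)\ge \tfrac{2(N+K)}{2N+K}$, uniformly in the positions $x_i$. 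For the rival's distance I would let $\{y_j\}_{j\ge1}$ be a greedy sequence on $X$ (any $dx$-equidistributed sequence works just as well), so that $W_1\!\big(\tfrac1M\sum_{j=1}^M\delta_{y_j},dx\big)\le W_2(\,\cdot\,)\lesssim_X M^{-1/d}$ for every $M$, by Brown--Steinerberger \cite{brownsteinerberger}. Feeding into the infimum defining $W_1^{1,1}$ the admissible pair $\tilde\sigma=\tfrac1{2N+K}\sum_j\delta_{y_j}$, $\tilde\tau=\tfrac{N+K}{2N+K}\,dx$ (equal masses), and using $W_1(c\,\cdot\,,c\,\cdot\,)=c\,W_1(\cdot,\cdot)$, one gets
\[ \mathbf{W}_1^{1,1}\!\Big(\tfrac1{2N+K}(-\mu_N),dx\Big)\le \tfrac{2N}{2N+K}+C_X\,(N+K)^{-1/d}, \]
where $\tfrac{2N}{2N+K}$ is merely $\abs{\tau-\tilde\tau}$, the variation needed to create the $x_i$-atoms together with the surplus of $dx$, again independent of the $x_i$.

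It then remains to compare the two estimates: the inequality of the theorem follows from $\tfrac{2(N+K)}{2N+K}>\tfrac{2N}{2N+K}+C_X(N+K)^{-1/d}$, i.e. from $\tfrac{2K}{2N+K}>C_X(N+K)^{-1/d}$. Over the finite range $1\le N\le N_0$ the left-hand side is at least $\tfrac{2K}{2N_0+K}\to 2$ and the right-hand side is at most $C_X K^{-1/d}\to 0$ as $K\to\infty$, so every sufficiently large $K$ makes the inequality hold for all $N\le N_0$ simultaneously, which is exactly the asserted winning strategy. I expect the only genuinely delicate points to be: justifying the mass-discrepancy lower bound for $W_1^{1,1}$ (a one-line consequence of the definition, or a reference to \cite{piccoli}); and arranging the rival's single choice of sequence to control every prefix length $N+K$ with $1\le N\le N_0$ at once — which is precisely why a greedy / equidistributed sequence, every prefix of which is controlled, is the right object rather than an ad hoc near-optimal configuration of a fixed number of points. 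In truth the "hard part" is conceptual: noticing that with the normalization $\tfrac1{N_1+N_2}$ the crude total-mass bound already does all the work.
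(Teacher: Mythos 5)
Your proposal is correct and follows essentially the paper's own argument: the same mass-imbalance lower bound (Lemma \ref{cotainferiorwassers}) gives $1+\tfrac{K}{2N+K}$ for our measure, the rival's distance is bounded above by $\tfrac{2N}{2N+K}$ plus a Brown--Steinerberger greedy term, and $K$ is then taken large in terms of $N_0$. The only deviation is the rival's strategy: you have the rival play a single greedy sequence of length $N+K$ and do the bookkeeping on the full Jordan decomposition, whereas the paper has the rival copy our shops after its first $K$ greedy ones so that $\mu_N$ collapses to $-\sum_{j=1}^{K}\delta_{y_j}$; both routes yield the same numerical bounds, and your variant has the mild advantage that the lower bound is uniform in the $x_i$ and the rival's winning strategy is non-adaptive.
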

\addtocounter{theorem}{-1}
\endgroup

    

The structure of the paper is as follows: Section \ref{sectiontwo} presents the definitions and notation for the generalization of the Wasserstein distance in order to introduce the Signed Wasserstein distance. In Section \ref{sectionthree}, we study the case where the competition is fixed. In Section \ref{section4}, we analyze the three different scenarios for dynamic competition. Lastly, Appendix \ref{appendixone} provides a brief discussion of the constant that multiplies $\sum_{i=1}^{N_1}\delta_{x_i}-\sum_{j=1}^{N_2}\delta_{y_j}$.

The original impetus for this paper arose from the authors' participation in the PIMS-IFDS-NSF Summer School on Optimal Transport in Seattle in June, 2022 and a fruitful talk with Stephan Steinerberger after his lecture. The authors would like to thank the Kantorovich Initiative for organizing this stimulating event. They also want to thank Jaime Santos for his valuable comments and guidelines at different stages of this paper and their advisor Luis Guijarro for his final appreciations of the manuscript.

\color{black}
\section{Generalizing the Wasserstein distance}{\label{sectiontwo}}
In this section we will recall the standard notions of optimal transport and Wasserstein distance. Then we will expand the definitions to signed measures. The definition of signed measure distance
dates back to Piccoli \cite{piccoli}, but see also Bubenik \cite[Section 1.1.1]{bubenik} for an alternative
exposition.

Let $X$ be a compact Riemannian manifold of dimension $d$ with distance function $\dist$. Denote by $\mathcal M(X)$ the set of positive finite measures in $X$, and by $\mathcal M^s(X)$ the set of \emph{signed} finite measures in $X$. In addition, we will call $dx$ the normalized Riemannian volume measure, (i.e. $dx(X)=1$).

\begin{definition}
A \emph{transference plan} between two positive measures $\mu, \nu \in \mathcal M(X)$ of the same mass is a finite positive measure $\pi \in \mathcal M (X \times X)$ which satisfies that, for all $A, B$ Borel subsets of $X$,
\[ \pi(A\times X) = \mu(A), \quad \text{and} \quad \pi(X \times B) = \nu(B). \]
\end{definition}
In other words, a valid plan $\pi$ has to have $\mu$ and $\nu$ as marginal distributions. Note that we require $|\mu| = | \nu| = \pi( X \times X)$, where $|\mu|$ denote the total mass of a certain measure $\mu$. We denote by $\Gamma(\mu, \nu)$ the set of transference plans between those two measures. Then, we define the $p$-Wasserstein distance for $p\ge 1$ and two positive Radon measures of the same mass by
\[
W_p( \mu, \nu) := \left( \min_{\pi \in \Gamma(\mu, \nu)} \int_{X \times X }\dist(x,y)^p d\pi (x, y) \right) ^{\frac{1}{p}} .
\]

In order to calculate distances in our model, we will need to generalize this distance first for possibly $|\mu| \neq |\nu|$ and then to signed measures. We can do that as follows.
\begin{definition}{\label{definiciongeneralizada1}}[Generalized  Wasserstein distance, \cite{mainini},\cite{piccoli}]
Let $\mu, \nu$ be two positive measures in $\mathcal M (X)$ with possibly different mass. The generalized Wasserstein distance between $\mu$ and $\nu$ is given for $p\ge 1$ $a>0$ and $b>0$ by
\[
W_p^{a,b}(\mu,\nu) = \left( \inf_{\substack{ \tilde{\mu}, \tilde{\nu}\in \mathcal M (X) \\ |\tilde{\mu}| = |\tilde{\nu}|}} a^p( |\mu-\tilde{\mu}| + |\nu- \tilde{\nu}|)^p + b^p W_p^p(\tilde{\mu}, \tilde{\nu}) \right)^{1/p}.
\]
\end{definition}
We can now extend this distance to signed measures $\mu, \nu \in \mathcal M^s(X)$ by decomposing $\mu= \mu_+ - \mu_-$, known as the \textit{Jordan decomposition}. Here, $\mu_+$ and $\mu_-$ are nonnegative finite measures mutually singular with disjoint support. The property of being mutually singular can be understood as the existence of a set $X^+\in\mathcal{B}$, where $\mathcal{B}$ denote the $\sigma$-algebra, such that $\mu_+(X\backslash X^+)=0$ and $\mu_-(X^+)=0$. Of course, the same decomposition exists for $\nu$. 

The total mass of a signed measure $\mu = \mu_+ - \mu_-$ is defined as the sum of the total mass of its parts, that is, $|\mu| := |\mu_+| + |\mu_-| = \mu_+(X) + \mu_-(X)$.

\begin{definition}[Signed Generalized  Wasserstein distance, \cite{mainini},\cite{piccoli}]{\label{definiciongeneralizada2}} Let $\mu, \nu \in \mathcal M^s (X)$. We define their distance by
\[
\mathbf{W}_1^{a,b} (\mu, \nu) := W_1^{a,b} (\mu_+ + \nu_- , \mu_- + \nu_+).
\]
\end{definition}

\begin{rema}
    $W_p^{a,b}$ is not a distance for $p>1$ (see \cite[page 2]{piccoli}). But, if we pick $p=1$, $\mathbf{W}_1^{a,b}$ is indeed a distance (see \cite[Section 3]{piccoli}). In fact, it can be shown that for all choices of $a,b>0$ these distances are induced by equivalent norms $\norm{\mu}^{a,b} := W_1^{a,b}(\mu, 0) = W^{a,b}_1(\mu^+, \mu^-)$ with that equivalence being
    \[
    \min\{a,b\} \norm{\mu}^{1,1} \le \norm{\mu}^{a,b} \le \max\{a,b\} \norm{\mu}^{1,1}.
    \]
    We will work with $\mathbf{W}_1^{1,1}$ on the paper, because we have \[\mathbf{W}_1^{1,1} \le \frac{1} {\min\{a,b\} }\mathbf{W}_1^{a,b}\] as a consequence of the equivalence. 
\end{rema}

The key for $\mathbf{W}^{a,b}_1$ to induce a norm is that {\color{black} the distance does not change after adding any measure at both sides of $\mathbf{W}^{a,b}_1(\cdot, \cdot)$}. Specifically, if $\mu, \nu, \eta \in \mathcal M^s(X)$ are finite signed measures, then it is true that

\[
\mathbf{W}^{a,b}_1(\mu+\eta, \nu + \eta ) = \mathbf{W}^{a,b}_1(\mu, \nu).
\]

This property ensures that $\mathbf{W}^{a,b}_1$ is a distance, since it implies that the distance between two measures is unaffected by adding the same amount of mass to both of them.

\begin{rema}
    As we pointed in the Introduction, although Brown and Steinerberger's results are stated in terms of the $2$--Wasserstein distance, due to the Hölder inequality, we can extend them to the $1$--Wasserstein distance in order to use the Generalized and Signed Wasserstein distance.

    A very interesting open question is regarding to extend their results to $p>2$ as it is not straightforward from the proofs. Moreover, generalize them for geodesic metric spaces would also be of high interest.
\end{rema}

We would also like to mention that there exist other notions of Wasserstein distances that may be applicable to the problem we are considering. One such distance is the \textit{Unbalanced Wasserstein Distance}, introduced by Liero, Mielke, and Savaré in \cite{liero} and by Chizat, Peyré, Schmitzer, and Vialard in \cite{gabrielpeyre}. This distance is a natural way to handle positive measures with different masses, and it benefits from concentrations of mass in the region of the support. However, it is not designed for signed measures, which are necessary for our purposes. Interested readers can find more information about the Unbalanced Wasserstein Distance in the following reference \cite{chizattesis}. In addition, the partial optimal transport presented by Figalli and Gigli \cite{figalli2,figalli3} could also be a good setting for generalize all these kind of optimization, localization and transport problems.

\section{Fixed competition}{\label{sectionthree}}
As we have pointed out in the introduction, we divide our study into two cases. In the first one, the competition only opens a fixed number $N_2>0$ of Coffee Shops. With the distance described in Section \ref{sectiontwo}, we want to see how \[\mathbf{W}_1^{1,1}\left(\frac{1}{N_1+N_2}\left[\sum_{i=1}^{N_1}\delta_{x_i}-\sum_{j=1}^{N_2}\delta_{y_j}\right],dx\right)\] behaves for all $N_1$.

Intuitively, when $N_1$ is much bigger than $N_2$, the rival's influence will be very small. We will formalize that in the following results. 

    \begin{thm}{\label{teoremauno}}
    Let $X$ be a smooth, compact $d$--dimensional manifold without boundary, $d\geq 3$, $G:X\times X\to\mathbb{R}\cup\{\infty\}$ denote the Green's function of the Laplacian normalized to have average value $0$ over the manifold and $N_1, N_2>0$. Then for any distinct sets of points $\{x_1,\dots,x_{N_1}\}$ and $\{y_1,\dots,y_{N_2}\}$ we obtain
    \[ \mathbf{W}_1^{1,1}(A,dx)
\lesssim_X\frac{2N_2}{N_1+N_2}+\frac{1}{(N_1+N_2)^{1/d}} + \frac{1}{N_1+N_2} \left| \sum_{k\neq \ell} G(z_k, z_\ell) \right| ^{1/2}.
\]
    where \[A = \frac{1}{N_1+N_2}\left[\sum_{i=1}^{N_1}\delta_{x_i}-\sum_{j=1}^{N_2}\delta_{y_j}\right]\] and $z_i=x_i$ from $i=1$ to $N_1$ and $z_i=y_{i-N_1}$ for $i=N_1+1$ to $N_1+N_2$.
    \end{thm}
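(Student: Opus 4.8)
The plan is to reduce the signed-measure statement to Steinerberger's Theorem~\ref{steinerbergermanifold} applied to the $N_1+N_2$ points $z_1,\dots,z_{N_1+N_2}$, paying for the sign change with the error term $\frac{2N_2}{N_1+N_2}$. Concretely, write $A = \frac{1}{N_1+N_2}\bigl[\sum_{i=1}^{N_1}\delta_{x_i} - \sum_{j=1}^{N_2}\delta_{y_j}\bigr]$ and let $B = \frac{1}{N_1+N_2}\sum_{k=1}^{N_1+N_2}\delta_{z_k}$ be the genuine probability measure obtained by flipping the rival deltas to positive. The first step is to bound $\mathbf{W}_1^{1,1}(A,dx) \le \mathbf{W}_1^{1,1}(A,B) + \mathbf{W}_1^{1,1}(B,dx)$ using the triangle inequality (valid since $\mathbf{W}_1^{1,1}$ is a genuine distance, as recalled after Definition~\ref{definiciongeneralizada2}).

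For the second term, $B$ is a positive measure of mass $1$, so $\mathbf{W}_1^{1,1}(B,dx) = W_1^{1,1}(B,dx) \le W_1(B,dx) \le W_2(B,dx)$ by the monotonicity of generalized Wasserstein in the first exponent and Hölder's inequality; then Theorem~\ref{steinerbergermanifold} applied to $\{z_1,\dots,z_{N_1+N_2}\}$ gives exactly $\lesssim_X \frac{1}{(N_1+N_2)^{1/d}} + \frac{1}{N_1+N_2}\bigl|\sum_{k\neq\ell}G(z_k,z_\ell)\bigr|^{1/2}$. For the first term, I would compute $\mathbf{W}_1^{1,1}(A,B)$ directly from the definition: the Jordan decompositions give $A_+ = \frac{1}{N_1+N_2}\sum_{i=1}^{N_1}\delta_{x_i}$, $A_- = \frac{1}{N_1+N_2}\sum_{j=1}^{N_2}\delta_{y_j}$, $B_+ = B$, $B_- = 0$, so $\mathbf{W}_1^{1,1}(A,B) = W_1^{1,1}(A_+ + B_-,\, A_- + B_+) = W_1^{1,1}\bigl(A_+,\, A_- + B\bigr)$. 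Since $A_+$ has mass $\frac{N_1}{N_1+N_2}$ while $A_- + B$ has mass $\frac{N_2}{N_1+N_2} + 1$, and one can use the trivial admissible choice $\tilde\mu = \tilde\nu = A_+$ (or more simply bound $W_1^{1,1}(\mu,\nu)\le |\mu|+|\nu|$ when supports may be far, but here a sharper choice matching $A_+$ to the part of $A_-+B$ sitting at the same points is better) to see that the cost is at most the total mass that must be created or destroyed, namely $\le \frac{2N_2}{N_1+N_2}$: intuitively $A_+$ and $B$ share the $x_i$-deltas, leaving $\frac{N_2}{N_1+N_2}$ of unmatched mass on each side ($A_-$ on one side, the $y_j$-part of $B$ on the other), each of which is simply added/removed at cost $\frac{N_2}{N_1+N_2}$.

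The main obstacle, and the step requiring the most care, is the clean bookkeeping of $\mathbf{W}_1^{1,1}(A,B)$: one must choose the auxiliary measures $\tilde\mu,\tilde\nu$ in the infimum defining $W_1^{1,1}$ so that the shared mass at the $x_i$'s is transported at zero cost while only the $2N_2/(N_1+N_2)$ worth of "rival" mass is paid for via the $|\mu-\tilde\mu| + |\nu-\tilde\nu|$ penalty, and to verify this does not secretly cost more because of the normalization. It is cleaner to invoke the translation-invariance property $\mathbf{W}_1^{a,b}(\mu+\eta,\nu+\eta) = \mathbf{W}_1^{a,b}(\mu,\nu)$ recalled in Section~\ref{sectiontwo}: subtracting the common measure $\frac{1}{N_1+N_2}\sum_{i=1}^{N_1}\delta_{x_i}$ from both arguments reduces $\mathbf{W}_1^{1,1}(A,B)$ to $\mathbf{W}_1^{1,1}\bigl(-\frac{1}{N_1+N_2}\sum_{j}\delta_{y_j},\ \frac{1}{N_1+N_2}\sum_j\delta_{y_j}\bigr)$, which by Definition~\ref{definiciongeneralizada2} equals $W_1^{1,1}\bigl(\frac{2}{N_1+N_2}\sum_j\delta_{y_j},\,0\bigr) \le \frac{2N_2}{N_1+N_2}$, the last inequality because destroying a measure of mass $m$ costs at most $m$ in $W_1^{1,1}$ (take $\tilde\mu=\tilde\nu=0$). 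Assembling the three bounds via the triangle inequality yields the claimed estimate.
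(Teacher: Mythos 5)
Your proposal is correct and follows essentially the same route as the paper: both compare $A$ to the positive empirical measure $B=\frac{1}{N_1+N_2}\sum_{k=1}^{N_1+N_2}\delta_{z_k}$, pay $\frac{2N_2}{N_1+N_2}$ for flipping the rival deltas, and then apply Steinerberger's bound to $W_1(B,dx)\le W_2(B,dx)$. The only difference is cosmetic: the paper gets the same estimate in one step by choosing $\tilde{\mu}=B$ and $\tilde{\nu}=dx$ in the infimum defining $W_1^{1,1}(A_+,dx+A_-)$, whereas you route it through the triangle inequality for $\mathbf{W}_1^{1,1}$ together with the translation-invariance property, both yielding the identical intermediate inequality $\mathbf{W}_1^{1,1}(A,dx)\le \frac{2N_2}{N_1+N_2}+W_1(B,dx)$.
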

    We note that the Green function is defined by
\[
G(x,y) = \sum_{k=1}^\infty \frac{\phi_k(x) \phi_k(y)}{\lambda_k},
\]
where $\phi_k$ are the eigenfunctions of the Laplace operator on $X$, and $\lambda_k$ its respective eigenvalues. That is, $-\Delta \phi_k = \lambda_k \phi_k$.
    \begin{proof}
    For the sake of simplicity, we denote \[A=\frac{1}{N_1+N_2}\left[\sum_{i=1}^{N_1}\delta_{x_i}-\sum_{j=1}^{N_2}\delta_{y_j}\right]\text{ and }B=\frac{1}{N_1+N_2}\left[\sum_{i=1}^{N_1}\delta_{x_i}+\sum_{j=1}^{N_2}\delta_{y_j}\right].\] We can decompose $A=A_+-A_-$ with $A_+=\frac{1}{N_1+N_2}\sum_{i=1}^{N_1}\delta_{x_i}$ and $A_-=\frac{1}{N_1+N_2}\sum_{j=1}^{N_2}\delta_{y_j}$. Using the definition in \ref{definiciongeneralizada2} we have that \[
    \mathbf{W}_1^{1,1}(A,dx)=W_1^{1,1}(A_+,dx+A_-)=\inf_{\substack{ \tilde{\mu}, \tilde{\nu}\in \mathcal M (X) \\ |\tilde{\mu}| = |\tilde{\nu}|}}\left(|A_+-\widetilde{\mu}|+|dx+A_--\widetilde{\nu}|+W_1(\widetilde{\mu},\widetilde{\nu})\right).
    \]Now, we choose $\widetilde{\mu}=B$ and $\widetilde{\nu}=dx$. So, we obtain, \begin{align*}
    \mathbf{W}_1^{1,1}(A,dx)&\leq\left(|A_+-B|+|dx+A_--dx|+W_1(B,dx)\right)\\
    &=\left|\frac{1}{N_1+N_2}\sum_{j=1}^{N_2}\delta_{y_j}\right|+\left|\frac{1}{N_1+N_2}\sum_{j=1}^{N_2}\delta_{y_j}\right|+W_1(B,dx)\\
    &=\frac{2N_2}{N_1+N_2}+W_1(B,dx).
    \end{align*}
    Now we will combine it with an upper bound for $W_1(B,dx)$ given in \cite[Theorem 1]{steinerbergermanifold}:
    \[
    W_1(B,dx) \le W_2(B, dx) \lesssim_X \frac{1}{(N_1+N_2)^{1/d}} + \frac{1}{N_1+N_2} \left| \sum_{k\neq \ell} G(z_k, z_\ell) \right| ^{1/2}
    \]
    
    Moreover,  \[\frac{2N_2}{N_1+N_2}+\frac{1}{(N_1+N_2)^{1/d}}\leq\frac{2N_2+1}{(N_1+N_2)^{1/d}},\]due to $N_1+N_2>(N_1+N_2)^{1/d}$. Putting everything together, we obtain the desired result: \begin{align*}
        \mathbf{W}_1^{1,1}(A,dx)&\leq\frac{2N_2}{N_1+N_2}+W_1(B,dx)\\
        &\lesssim_X\frac{2N_2}{N_1+N_2}+\frac{1}{(N_1+N_2)^{1/d}} + \frac{1}{N_1+N_2} \left| \sum_{k\neq \ell} G(z_k, z_\ell) \right| ^{1/2}\\
        &\leq\frac{2N_2+1}{(N_1+N_2)^{1/d}}+ \frac{1}{N_1+N_2} \left| \sum_{k\neq \ell} G(z_k, z_\ell) \right| ^{1/2}\\
        &\leq\frac{2N_2+1}{(N_1+N_2)^{1/d}}+ \frac{2N_2+1}{N_1+N_2} \left| \sum_{k\neq \ell} G(z_k, z_\ell) \right| ^{1/2} \qedhere
    \end{align*}
    \end{proof}

Now suppose the sequence $z_n$ is defined in the following way:
\begin{equation}{\label{greedy}}
z_n = \arg \min_x \sum_{k=1}^{n-1} G(x, x_k)
\end{equation}
We will say that such sequence is a \emph{greedy sequence} or that it is \emph{defined in a greedy manner}. 

\begin{thm}{\label{thm33}}
Let $z_n$ be a sequence obtained in the previous way on a $d$--dimensional compact Riemannian manifold with $d\geq 3$ and $\{x_1,\dots,x_{N_1}\}\subset\{z_i\}_{i=1}^{N_1+N_2}$ and $\{y_1,\dots,y_{N_2}\}\subset\{z_i\}_{i=1}^{N_1+N_2}$ such that $x_i\neq y_j$ for all $i$, $j$. Then \[
\mathbf{W}_1^{1,1}\left(\frac{1}{N_1+N_2}\left[\sum_{i=1}^{N_1}\delta_{x_i}-\sum_{j=1}^{N_2}\delta_{y_j}\right],dx\right) {\color{black} \lesssim_{X}\frac{2N_2}{N_1+N_2}+\frac{1}{(N_1+N_2)^{1/d}}.}
\]
\end{thm}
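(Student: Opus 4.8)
The plan is to re-run the opening of the proof of Theorem \ref{teoremauno} and then, in place of Steinerberger's inequality with its Green-function term, invoke the sharper estimate that Brown and Steinerberger proved for greedy sequences in \cite{brownsteinerberger}.

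First I would record the combinatorial observation that puts us in position to use the greedy-sequence results. Since $G(x,z_k)\to+\infty$ as $x\to z_k$ (recall $d\ge 3$, so the Green's function blows up on the diagonal), the rule \eqref{greedy} never reselects a previously chosen point, so $z_1,\dots,z_{N_1+N_2}$ are pairwise distinct. As $\{x_1,\dots,x_{N_1}\}$ and $\{y_1,\dots,y_{N_2}\}$ are disjoint subsets of this $(N_1+N_2)$-element set, with $N_1$ and $N_2$ elements respectively, they must partition it. Consequently
\[
B:=\frac{1}{N_1+N_2}\left[\sum_{i=1}^{N_1}\delta_{x_i}+\sum_{j=1}^{N_2}\delta_{y_j}\right]=\frac{1}{N_1+N_2}\sum_{k=1}^{N_1+N_2}\delta_{z_k},
\]
i.e.\ $B$ is precisely the empirical measure of the first $N_1+N_2$ terms of the greedy sequence.

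Next I would copy the opening of the proof of Theorem \ref{teoremauno}: writing $A=A_+-A_-$ with $A_+=\frac{1}{N_1+N_2}\sum_i\delta_{x_i}$ and $A_-=\frac{1}{N_1+N_2}\sum_j\delta_{y_j}$, Definition \ref{definiciongeneralizada2} gives $\mathbf{W}_1^{1,1}(A,dx)=W_1^{1,1}(A_+,\,dx+A_-)$, and testing the infimum with $\widetilde\mu=B$ and $\widetilde\nu=dx$ (legitimate since $|B|=1=|dx|$) yields
\[
\mathbf{W}_1^{1,1}(A,dx)\le |A_+-B|+|dx+A_--dx|+W_1(B,dx)=\frac{2N_2}{N_1+N_2}+W_1(B,dx).
\]
Then, using $W_1(B,dx)\le W_2(B,dx)$ by Hölder and the greedy-sequence bound of \cite[Theorems 1 \& 3]{brownsteinerberger}, which gives $W_2\big(\tfrac{1}{n}\sum_{k=1}^{n}\delta_{z_k},dx\big)\lesssim_X n^{-1/d}$ with $n=N_1+N_2$, I would conclude $\mathbf{W}_1^{1,1}(A,dx)\lesssim_X\frac{2N_2}{N_1+N_2}+\frac{1}{(N_1+N_2)^{1/d}}$, which is the assertion. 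Alternatively, one may keep Theorem \ref{teoremauno} as a black box and only use that Brown and Steinerberger establish $\big|\sum_{k\ne\ell}G(z_k,z_\ell)\big|\lesssim_X n^{2-2/d}$, so that the Green-function term in Theorem \ref{teoremauno} already satisfies $\frac{1}{n}\big|\sum_{k\ne\ell}G(z_k,z_\ell)\big|^{1/2}\lesssim_X n^{-1/d}$.

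I do not anticipate a genuine obstacle. The only point requiring care is the bookkeeping in the first step: one must check that the hypotheses force $\{x_i\}\sqcup\{y_j\}=\{z_1,\dots,z_{N_1+N_2}\}$, so that $B$ really is the uniform measure on a greedy sequence and the Brown--Steinerberger estimate applies with no loss. Everything else is a direct combination of Hölder's inequality, Definition \ref{definiciongeneralizada2}, and the cited theorem, exactly as in the proof of Theorem \ref{teoremauno}.
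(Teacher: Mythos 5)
Your proposal is correct and follows essentially the same route as the paper: reuse the decomposition from the proof of Theorem \ref{teoremauno} (testing the infimum with $\widetilde\mu=B$, $\widetilde\nu=dx$ to get the $\frac{2N_2}{N_1+N_2}+W_1(B,dx)$ bound) and then replace the Green-function estimate by the greedy-sequence bound of Brown and Steinerberger applied to $B=\frac{1}{N_1+N_2}\sum_{k=1}^{N_1+N_2}\delta_{z_k}$. Your explicit check that $\{x_i\}$ and $\{y_j\}$ partition $\{z_1,\dots,z_{N_1+N_2}\}$, so that $B$ really is the greedy empirical measure, is a detail the paper leaves implicit, but the argument is the same.
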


\begin{proof}
We will pick the same notation as in the proof of Theorem \ref{teoremauno}. In this case, we are going to use another result from Steinerberger together with Brown \cite[Theorem 3]{brownsteinerberger}, which entails that for a sequence $z_n$ constructed in a greedy way, we have that \begin{equation} \label{cotasteinerberger}
W_2\left(\frac1n\sum_{k=1}^n\delta_{z_k},dx\right)\lesssim_X\frac{1}{n^{1/d}},
\end{equation}
for $d\geq 3$. So, in our case, 
\begin{align*}
    \mathbf{W}_1^{1,1}(A,dx)&\leq\frac{2N_2}{N_1+N_2}+W_1(B,dx)\\
    &\leq\frac{2N_2}{N_1+N_2}+\frac{1}{(N_1+N_2)^{1/d}},
\end{align*}and we obtain our result.
\end{proof}

{\color{black}In \cite[Section 1.2]{brownsteinerberger2}, Brown and Steinerberger argue that their bound \eqref{cotasteinerberger} is the best in terms of $N$ possible, because, for every set of points $\{x_1, \ldots, x_N\} \subset X$ we have
\begin{equation} \label{cotasteinerbergerinversa}
W_1\left( \frac 1 N \sum_{i=1}^N \delta_{x_i}, dx \right) \ge \frac{c}{N^{1/d}} 
\end{equation}
for $c>0$ a constant depending only on the manifold.
}
\begin{prop}
We have a lower bound that is independent of the sets $\{x_1, \ldots, x_{N_1} \}$ and $\{y_1, \ldots, y_{N_2}\}$. Indeed,
\[
\mathbf{W}_1^{1,1}\left(\frac{1}{N_1+N_2}\left[\sum_{i=1}^{N_1} \delta_{x_i} - \sum_{j=1}^{N_2} \delta_{y_j} \right], dx\right) \ge \frac{c}{(N_1+N_2)^{1/d}} - \frac{2N_2}{N_1+N_2},
\]
where $c>0$ {\color{black} is the same constant from \eqref{cotasteinerbergerinversa} } that depends only on the manifold $X$.
\end{prop}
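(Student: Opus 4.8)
The plan is to compare $A$ with its ``all positive'' counterpart $B=\frac{1}{N_1+N_2}\big[\sum_{i=1}^{N_1}\delta_{x_i}+\sum_{j=1}^{N_2}\delta_{y_j}\big]$, which is an honest probability measure supported on the $n:=N_1+N_2$ points $z_1,\dots,z_n$, and then to feed $B$ into the lower bound \eqref{cotasteinerbergerinversa} for empirical measures. Since $\mathbf W_1^{1,1}$ is a genuine metric, the triangle inequality gives
\[
\mathbf W_1^{1,1}(B,dx)\ \le\ \mathbf W_1^{1,1}(B,A)+\mathbf W_1^{1,1}(A,dx),
\]
so it suffices to (i) evaluate $\mathbf W_1^{1,1}(B,A)$ exactly and (ii) bound $\mathbf W_1^{1,1}(B,dx)$ from below.

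For (i), write $A=A_+-A_-$ with $A_-=\frac{1}{n}\sum_{j=1}^{N_2}\delta_{y_j}$, so that $B-A=2A_-$. By the translation invariance $\mathbf W_1^{1,1}(\mu+\eta,\nu+\eta)=\mathbf W_1^{1,1}(\mu,\nu)$ applied with $\eta=A$ one gets $\mathbf W_1^{1,1}(B,A)=\mathbf W_1^{1,1}(2A_-,0)$, and since $2A_-\ge 0$ this equals $W_1^{1,1}(2A_-,0)$ by Definition \ref{definiciongeneralizada2}. Finally, for a positive measure $\eta$ one has $W_1^{1,1}(\eta,0)=|\eta|$: the choice $\widetilde\mu=\widetilde\nu=0$ in Definition \ref{definiciongeneralizada1} shows ``$\le$'', while for any admissible pair $|\eta-\widetilde\mu|+|\widetilde\nu|+W_1(\widetilde\mu,\widetilde\nu)\ge |\eta|-|\widetilde\mu|+|\widetilde\nu|=|\eta|$ (using $|\widetilde\mu|=|\widetilde\nu|$) shows ``$\ge$''. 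Hence $\mathbf W_1^{1,1}(B,A)=|2A_-|=\tfrac{2N_2}{N_1+N_2}$.

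For (ii), since $B$ and $dx$ are both positive, $\mathbf W_1^{1,1}(B,dx)=W_1^{1,1}(B,dx)$, and $B=\frac1n\sum_{k=1}^n\delta_{z_k}$ is an $n$-point empirical measure. The key observation is that the lower bound \eqref{cotasteinerbergerinversa} survives verbatim for $W_1^{1,1}$ with the \emph{same} constant $c$: its proof produces a single $1$-Lipschitz function $\varphi$ with $\|\varphi\|_\infty\le 1$ (a ``bump'' of height $\asymp n^{-1/d}$ supported on a ball of radius $\asymp n^{-1/d}$ that, by a volume/pigeonhole count, avoids all the $z_k$) with $\int\varphi\,d(dx-B)\ge c\,n^{-1/d}$, and such a $\varphi$ is an admissible competitor in the Kantorovich-type dual of $W_1^{1,1}$ (see \cite{piccoli}), whose test functions must satisfy both $\mathrm{Lip}(\varphi)\le 1$ and $\|\varphi\|_\infty\le 1$. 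Thus $\mathbf W_1^{1,1}(B,dx)\ge c\,n^{-1/d}$, and combining with (i),
\[
\mathbf W_1^{1,1}(A,dx)\ \ge\ \mathbf W_1^{1,1}(B,dx)-\tfrac{2N_2}{N_1+N_2}\ \ge\ \frac{c}{(N_1+N_2)^{1/d}}-\frac{2N_2}{N_1+N_2}.
\]

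The one delicate point is (ii): one must resist the temptation to cite \eqref{cotasteinerbergerinversa} for $W_1$ directly, because $W_1^{1,1}\le W_1$ and so that bound does not formally transfer; what rescues the argument is that the covering proof of \eqref{cotasteinerbergerinversa} is really a duality lower bound witnessed by a function that happens to be simultaneously $1$-Lipschitz and $\le 1$ in sup-norm, hence legal for $\mathbf W_1^{1,1}$ too. Incidentally, running that same covering/test-function argument directly on $A$ — with $\varphi=\pm\min\{\dist(\cdot,\{z_1,\dots,z_n\}),\,n^{-1/d}\}$, which integrates to $0$ against the atomic part of $A$ — even eliminates the $\tfrac{2N_2}{N_1+N_2}$ loss, albeit possibly with a different absolute constant.
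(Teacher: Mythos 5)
Your argument follows the same skeleton as the paper's proof: compare $A$ with the positive measure $B=\frac{1}{N_1+N_2}\bigl[\sum_i \delta_{x_i}+\sum_j\delta_{y_j}\bigr]$ via the triangle inequality, evaluate the $B$--$A$ term as $\tfrac{2N_2}{N_1+N_2}$ (the paper only needs the upper bound, obtained exactly as you do, via translation invariance and the choice $\tilde{\mu}=\tilde{\nu}=0$), and then invoke the Brown--Steinerberger lower bound \eqref{cotasteinerbergerinversa} for the $(N_1+N_2)$-point empirical measure $B$. Where you genuinely differ is your step (ii), and this is to your credit: the paper writes the plain $W_1(B,dx)$ on the left-hand side of its triangle inequality \eqref{cuentaprop33} and substitutes \eqref{cotasteinerbergerinversa} there, passing over the fact that the triangle inequality for $\mathbf{W}_1^{1,1}$ only controls $\mathbf{W}_1^{1,1}(B,dx)$, and $\mathbf{W}_1^{1,1}(B,dx)\le W_1(B,dx)$ goes the wrong way (the two can differ, e.g.\ when mass is cheaper to destroy and recreate than to transport across a manifold of diameter larger than $2$). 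Your repair --- that $W_1^{1,1}$ is the flat (bounded-Lipschitz) metric, and that the lower bound \eqref{cotasteinerbergerinversa} is witnessed by a test function which is simultaneously $1$-Lipschitz and of sup-norm at most $1$, hence admissible in the dual formulation of $\mathbf{W}_1^{1,1}$ --- is exactly what is needed to make this step airtight, so your proof is a more careful version of the same route.

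Two small corrections. First, the parenthetical description of the witness (a single bump of height and radius of order $n^{-1/d}$ on a ball avoiding all the $z_k$) would only certify a lower bound of order $n^{-(d+1)/d}$, since such a bump integrates to roughly $r^{d+1}$ against $dx$; the correct witness is the one you name at the end, $\varphi=\min\{\dist(\cdot,\{z_1,\dots,z_n\}),r\}$ with $r$ of order $n^{-1/d}$, which vanishes on all atoms of $B$ and satisfies $\int\varphi\,dx\gtrsim r$ because the $n$ balls of radius $r$ cover at most half of the volume. Second, the constraint $\norm{\varphi}_\infty\le 1$ may force you to cap $\varphi$ at height $1$ for small $n$ on manifolds of large volume, so strictly speaking the constant in your flat-metric lower bound is a manifold-dependent constant that could be slightly smaller than the $c$ of \eqref{cotasteinerbergerinversa}; this is harmless for the inequality being proved, but worth noting since the statement advertises the same constant.
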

If $N_2$ is fixed and $N_1 \to \infty$, {\color{black} this bound is asymptotically as good as \eqref{cotasteinerbergerinversa} when $N\to \infty$, because they are comparable for big values of $N$:
\[
\lim_{N\to \infty} \frac{\frac{c}{(N+N_2)^{1/d}} - \frac{2N_2}{N+N_2}}{ \frac{c}{N^{1/d}}} = 1
\]}
\begin{proof}
 By the triangle inequality we know that

\[ 
    W_1\left(\frac{1}{N_1+N_2}\left[\sum_{i=1}^{N_1} \delta_{x_i} + \sum_{j=1}^{N_2} \delta_{y_j} \right], dx\right)  \] 
    \begin{equation} \label{cuentaprop33} \le \mathbf{W}_1^{1,1}\left(\frac{1}{N_1+N_2}\left[\sum_{i=1}^{N_1} \delta_{x_i} - \sum_{j=1}^{N_2} \delta_{y_j} \right], dx\right) \end{equation}
    \[+ \mathbf{W}_1^{1,1}\left(\frac{1}{N_1+N_2}\left[\sum_{i=1}^{N_1} \delta_{x_i} + \sum_{j=1}^{N_2} \delta_{y_j} \right], \frac{1}{N_1+N_2}\left[\sum_{i=1}^{N_1} \delta_{x_i} - \sum_{j=1}^{N_2} \delta_{y_j} \right] \right) .\]
    As $\mathbf{W}_1^{1,1}$ is invariant by ations, we can bound the last term by
    
    \begin{equation}\label{cotaprop34} \begin{split}\mathbf{W}_1^{1,1}\left(\frac{1}{N_1+N_2}\left[\sum_{i=1}^{N_1} \delta_{x_i} + \sum_{j=1}^{N_2} \delta_{y_j} \right], \frac{1}{N_1+N_2}\left[\sum_{i=1}^{N_1} \delta_{x_i} - \sum_{j=1}^{N_2} \delta_{y_j} \right] \right)  \\
    =
W_1^{1,1} \left(0, \frac{2}{N_1+N_2} \sum_{j=1}^{N_2} \delta_{y_j} \right) \le  \left| \frac{2}{N_1+N_2} \sum_{j=1}^{N_2} \delta_j \right| = \frac{2N_2}{N_1 + N_2}. \end{split}
\end{equation}
The inequality is obtained by choosing $\tilde{\mu} = \tilde{\nu} = 0$ in the infimum inside of $W_1^{1,1}$.

Now, {\color{black}  substituting \eqref{cotasteinerbergerinversa} at the left hand side of \eqref{cuentaprop33} and \eqref{cotaprop34} at the right hand side}, we have that
\[
\frac{c}{(N_1+N_2)^{1/d}} \le \mathbf{W}_1^{1,1}\left(\frac{1}{N_1+N_2}\left[\sum_{i=1}^{N_1} \delta_{x_i} - \sum_{j=1}^{N_2} \delta_{y_j} \right], dx\right) + \frac{2N_2}{N_1+N_2},
\]
which clearly implies our result.
\end{proof}

\section{Non--fixed competition}{\label{section4}}


In this section, we provide an overview of scenarios where the rival's growth rate is comparable to ours. Although there may be a general framework that captures all such cases, we choose to examine each scenario separately for clarity.

\subsection{Forbidden areas}

Up until this point, we have measured victory solely in terms of the Signed Wasserstein distance between the difference of the sums of the Dirac deltas and the uniform distribution. An alternative approach is to compute the distance between each individual set of coffee shops and the uniform distribution, as described in the Steinerberger and Brown papers.

In this section, we consider a scenario where our rival has already opened coffee shops and ``colonized" a certain area, such that we are unable to open our own shops within that region of our space $X$. Consequently, our limit as we approach the $dx$ measure will not encompass this region, whereas our rival's will.

The key to our proof lies in the following proposition:

\begin{prop}{\label{prop41}}
Suppose $\{x_i\}$ is any sequence in $X\backslash B_r(p)$, where $B_r(p)$ is an open ball of center $p\in X$ and radius $r>0$. Then,
\[ W_1\left( \frac{1}{N} \sum_{i=1}^N \delta_{x_i} , dx \right)\ge \frac{r}{2} \vol(B_{r/2}(p) ) ,
\]
and in particular $\frac{1}{N} \sum_{i=1}^N \delta_{x_i}$ does not converge to $dx$.
\end{prop}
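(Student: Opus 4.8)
The plan is to exploit the fact that the measure $\frac{1}{N}\sum_{i=1}^N\delta_{x_i}$ places no mass on $B_r(p)$, while $dx$ assigns positive mass there; any transport plan must therefore move at least that much mass \emph{into} $B_r(p)$ from outside, and each such unit of mass travels a definite distance. Concretely, I would use the Kantorovich--Rubinstein duality $W_1(\mu,\nu)=\sup\{\int f\,d(\mu-\nu): \mathrm{Lip}(f)\le 1\}$, and simply exhibit a good $1$--Lipschitz test function. The natural choice is a ``tent'' centered at $p$: set $f(x)=\max\{0,\tfrac{r}{2}-\dist(x,p)\}$, which is $1$--Lipschitz, supported in $B_{r/2}(p)$, vanishes outside it (in particular it vanishes on the whole support $\{x_i\}\subset X\setminus B_r(p)$, since $B_{r/2}(p)\subset B_r(p)$), and satisfies $f\ge 0$ everywhere.

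Then the key estimate is
\[
W_1\!\left(\tfrac1N\textstyle\sum_{i=1}^N\delta_{x_i},\,dx\right)\ \ge\ \int_X f\,d\!\left(dx-\tfrac1N\textstyle\sum_i\delta_{x_i}\right)\ =\ \int_X f\,dx\ -\ 0\ =\ \int_{B_{r/2}(p)} f\,dx.
\]
To get the stated bound I would not try to compute $\int_{B_{r/2}(p)} f\,dx$ exactly; instead I observe $f\ge 0$ everywhere and, more crudely, replace $f$ by an even simpler competitor. One clean option: use $g(x)=\max\{0,\tfrac r2-\dist(x,p)\}$ but bound $\int g\,dx \ge \tfrac{r}{2}\cdot 0$... that is too weak, so instead restrict integration: on the smaller ball $B_{r/4}(p)$ one has $g\ge r/4$, giving $W_1\ge \tfrac r4\vol(B_{r/4}(p))$. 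To recover precisely the paper's constant $\tfrac r2\vol(B_{r/2}(p))$ I would instead take the test function $h(x)=\mathbf{1}_{B_{r/2}(p)}\cdot \tfrac r2$ — no, that is not Lipschitz. The honest route to their exact constant is: note that any transport plan must carry all the mass $dx(B_{r/2}(p))=\vol(B_{r/2}(p))$ that $dx$ puts on $B_{r/2}(p)$ from \emph{outside} $B_r(p)$ (since the source measure is supported there), and each such mass element is moved a distance at least $r-\tfrac r2=\tfrac r2$; summing gives $W_1\ge \tfrac r2\vol(B_{r/2}(p))$. I would phrase this directly via the definition of $W_1$ as an infimum over plans $\pi\in\Gamma$, decomposing $\int \dist(x,y)\,d\pi \ge \int_{X\times B_{r/2}(p)} \dist(x,y)\,d\pi$ and using that on this set $x\notin B_r(p)$ so $\dist(x,y)\ge \tfrac r2$, while $\pi(X\times B_{r/2}(p))=dx(B_{r/2}(p))=\vol(B_{r/2}(p))$.

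The final clause — that $\frac1N\sum\delta_{x_i}$ does not converge to $dx$ — is then immediate: the right-hand side $\tfrac r2\vol(B_{r/2}(p))$ is a fixed positive constant independent of $N$ (positivity of $\vol(B_{r/2}(p))$ holds since $X$ is a manifold and $r>0$), so the $W_1$ distances are bounded away from $0$, hence no subsequential limit can be $dx$; and since $W_1$ metrizes weak convergence on a compact space, weak convergence to $dx$ is also excluded. The main obstacle, such as it is, is purely bookkeeping: making sure the ball inclusions and the ``distance at least $r/2$'' lower bound are stated for the correct radii so that the constant comes out as $\tfrac r2\vol(B_{r/2}(p))$ rather than something smaller; there is no analytic difficulty here, only the choice of whether to argue via duality (a Lipschitz tent) or directly via transport plans, and I would present the plan-based argument since it yields the exact constant with no integral computation.
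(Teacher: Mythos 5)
Your final, plan-based argument is exactly the paper's proof: restrict the transport plan to $(X\setminus B_r(p))\times B_{r/2}(p)$, bound the distance below by $r/2$, and use the marginal condition to get the mass $\vol(B_{r/2}(p))$, with the same conclusion that the distances stay bounded away from zero. The detour through Kantorovich--Rubinstein duality is unnecessary (and, as you noted, gives a weaker constant), but the argument you settle on is correct and coincides with the paper's.
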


\begin{proof}
 Suppose $\gamma$ is an optimal transport plan from $ \frac{1}{N} \sum_{i=1}^N \delta_{x_i}$ to the normalized volume measure $dx$. By our hypothesis, there is a mass $\vol(B_{r/2}(p) )$ outside $B_r(p)$ that has to travel a bigger distance than $r/2$ to arrive to $B_{r/2}(p) $. We can then bound below the integral of the definition of the Wasserstein distance by $\frac{r}{2} \vol(B_{r/2}(p) )$, the distance times the volume:
 \begin{align*}
 \int_{X\times X} \dist(x,y) \, d\gamma(x,y) &\ge \int_{X\setminus B_r(p) \times B_{r/2}(p)} (r/2) \, d\gamma(x,y)\\ 
 &= \int_{B_{r/2}(p)} (r/2) \, dy = (r/2) \vol(B_{r/2}(p)). 
 \end{align*}
 In the first inequality we restrict the domain of the integral, so we can bound below the distance. Then we just use Fubini's theorem and the fact that $\gamma$ is a transport plan to obtain the volume measure $dy$ after integrating in $x$.
\end{proof}

By using this, we conclude with a straightforward corollary:

\begin{cor}{\label{cor42}}
Suppose $x_i$ follows a greedy sequence, and $y_j$ is any sequence omitting an open ball. Then, there exist an $N_0$ such that, for every $N\ge N_0$,
\[
W_1\left( \frac{1}{N} \sum_{i=1}^N \delta_{x_i} , dx \right) < W_1\left( \frac{1}{N} \sum_{i=1}^N \delta_{y_j} , dx \right).
\]
\end{cor}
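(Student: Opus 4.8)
The plan is to combine the two previously established bounds: the greedy sequence gives an upper bound on the left-hand side, while Proposition \ref{prop41} gives a lower bound on the right-hand side, and then I check that these two bounds separate for $N$ large enough.

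\textbf{Step 1: Bound the left-hand side from above.} Since $x_i$ is a greedy sequence on the $d$--dimensional compact manifold $X$, the Brown--Steinerberger estimate \eqref{cotasteinerberger} (used already in the proof of Theorem \ref{thm33}) gives
\[
W_1\left( \frac{1}{N} \sum_{i=1}^N \delta_{x_i} , dx \right) \le W_2\left( \frac{1}{N} \sum_{i=1}^N \delta_{x_i} , dx \right) \lesssim_X \frac{1}{N^{1/d}},
\]
so there is a constant $C = C(X)$ and an index $N_1$ such that the left-hand side is at most $C N^{-1/d}$ for all $N \ge N_1$.

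\textbf{Step 2: Bound the right-hand side from below.} By hypothesis the sequence $y_j$ omits some open ball $B_r(p) \subset X$ with $r > 0$. Applying Proposition \ref{prop41} to $\{y_j\}$ yields, for every $N$,
\[
W_1\left( \frac{1}{N} \sum_{j=1}^N \delta_{y_j} , dx \right) \ge \frac{r}{2}\,\vol(B_{r/2}(p)) =: c_0 > 0,
\]
a positive constant independent of $N$.

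\textbf{Step 3: Separate the two.} Since $C N^{-1/d} \to 0$ as $N \to \infty$ while $c_0 > 0$ is fixed, there is an $N_0 \ge N_1$ with $C N^{-1/d} < c_0$ for all $N \ge N_0$. Chaining the inequalities from Steps 1 and 2 gives, for all $N \ge N_0$,
\[
W_1\left( \frac{1}{N} \sum_{i=1}^N \delta_{x_i} , dx \right) \le \frac{C}{N^{1/d}} < c_0 \le W_1\left( \frac{1}{N} \sum_{j=1}^N \delta_{y_j} , dx \right),
\]
which is the claim. (One should note the $y_j$ in the statement is indexed slightly loosely — the sum $\sum_{j=1}^N \delta_{y_j}$ has $N$ terms — so no issue arises.)

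There is really no hard part here: the corollary is a soft consequence of a vanishing upper bound meeting a constant lower bound, and both bounds are quoted from results proved earlier (the greedy-sequence estimate and Proposition \ref{prop41}). The only thing to be careful about is bookkeeping the two thresholds $N_1$ (from which the greedy bound is valid) and $N_0$ (from which the separation holds) and taking the larger; the constant $c_0$ depends only on $r$ and $p$, i.e. on the rival's forbidden region, and in particular is independent of $N$, which is exactly what makes the argument work.
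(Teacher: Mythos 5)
Your argument is correct and is precisely the intended one: the paper treats Corollary \ref{cor42} as an immediate consequence of Proposition \ref{prop41} (constant lower bound for the ball-omitting sequence) together with the greedy-sequence bound \eqref{cotasteinerberger}, which is exactly your Steps 1--3. No gaps; the bookkeeping of the two thresholds is handled appropriately.
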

In other words, a smart choice of shops will have better results than any sequence that omits a certain region.

\begin{rema}
Notice that in Section \ref{sectiontwo}, if our rival closes a certain area with their finite coffee shops, we will still win (approaching to $0$ the distance against $dx$ as we open more stores) because their approximation to ${d}x$ will be worse than ours. For that reason, it is important that the rival experiences some growth during the competition.
\end{rema}

\subsection{Rival growth in terms of ours}
\mbox{}%

\medskip



We can express the number of rival coffee shops, $N_2$, in terms of our own, $N_1$, by defining a function $f\colon\mathbb{N}\to\mathbb{N}$ such that $f(N_1)=N_2$. This allows us to summarize many specific cases into a single framework.

In this subsection, we will establish conditions that $f$ must satisfy in order for the rival to defeat us. \color{black} We recall that the winning strategy would be the one with less Signed Wasserstein distance against $dx$. That is, if for the same $\mu$ defined in \eqref{nuevamedida} we have
\[
\mathbf{W}_1^{1,1}(-\mu, dx) < \mathbf{W}_1^{1,1}(\mu, dx),
\]
then we say that the rivals would win because $\mathbf{W}_1^{1,1}(\mu, dx)$ is a metric of how well distributed our deltas are, and $\mathbf{W}_1^{1,1}(-\mu, dx)$ measures how well distributed the rival deltas are (note that the minus sign exchanges the positive and the negative deltas).
\color{black}
We divide this part into two subsections. Before presenting these conditions, we prove a technical lemma that will be used throughout the rest of this section.

\begin{lem}{\label{cotainferiorwassers}}
    Let $\mu, \nu \in \mathcal M^s(X)$ be two signed finite measures. Then,
    \[
    \mathbf{W}^{1,1}_1(\mu, \nu) \ge  | \mu(X) - \nu(X)|.
    \]
\end{lem}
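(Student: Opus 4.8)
The plan is to unravel the definitions and reduce everything to the elementary fact that the generalized Wasserstein distance $W_1^{1,1}$ between two \emph{positive} measures controls the difference of their masses, and that the signed distance is defined so that the total mass of a signed measure behaves additively. First I would write $\mu = \mu_+ - \mu_-$ and $\nu = \nu_+ - \nu_-$ for the Jordan decompositions, so that by Definition \ref{definiciongeneralizada2} we have $\mathbf{W}_1^{1,1}(\mu, \nu) = W_1^{1,1}(\mu_+ + \nu_-, \, \mu_- + \nu_+)$, where now both arguments are genuine positive finite measures. Note that the mass of the first argument minus the mass of the second is $(|\mu_+| + |\nu_-|) - (|\mu_-| + |\nu_+|) = (|\mu_+| - |\mu_-|) - (|\nu_+| - |\nu_-|) = \mu(X) - \nu(X)$, since for a signed measure $\mu(X) = \mu_+(X) - \mu_-(X) = |\mu_+| - |\mu_-|$. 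So it suffices to prove the claim for positive measures: for $\alpha, \beta \in \mathcal M(X)$ one has $W_1^{1,1}(\alpha, \beta) \ge |\,|\alpha| - |\beta|\,|$.

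For that reduced statement I would argue directly from Definition \ref{definiciongeneralizada1} with $p = a = b = 1$. Fix any competitor pair $\tilde\mu, \tilde\nu \in \mathcal M(X)$ with $|\tilde\mu| = |\tilde\nu|$, and look at the quantity $|\alpha - \tilde\mu| + |\beta - \tilde\nu| + W_1(\tilde\mu, \tilde\nu)$ that is being minimized. Here $|\cdot|$ denotes total mass, and the reverse triangle inequality for the total-mass functional (which is a norm on finite signed measures) gives $|\alpha - \tilde\mu| \ge |\,|\alpha| - |\tilde\mu|\,|$ and likewise $|\beta - \tilde\nu| \ge |\,|\beta| - |\tilde\nu|\,|$. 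Since $W_1(\tilde\mu, \tilde\nu) \ge 0$ and $|\tilde\mu| = |\tilde\nu| =: t$, we get
\[
|\alpha - \tilde\mu| + |\beta - \tilde\nu| + W_1(\tilde\mu, \tilde\nu) \ge \big|\,|\alpha| - t\,\big| + \big|\,|\beta| - t\,\big| \ge \big|\,|\alpha| - |\beta|\,\big|,
\]
the last step being the triangle inequality in $\mathbb R$. As this holds for every admissible $\tilde\mu, \tilde\nu$, taking the infimum yields $W_1^{1,1}(\alpha, \beta) \ge |\,|\alpha| - |\beta|\,|$. Applying this with $\alpha = \mu_+ + \nu_-$ and $\beta = \mu_- + \nu_+$ and using the mass computation from the first paragraph gives $\mathbf{W}_1^{1,1}(\mu, \nu) \ge |\mu(X) - \nu(X)|$, as desired.

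There is no real obstacle here; the only point that needs a line of care is the bookkeeping of signs in the mass identity $\mu(X) = |\mu_+| - |\mu_-|$ versus the total mass $|\mu| = |\mu_+| + |\mu_-|$ — the lemma is about the former, so one must be sure not to confuse it with $\|\mu\|^{1,1}$. I would also remark in passing that the bound is sharp (equality when $\mu, \nu$ are, say, scalar multiples of a common Dirac mass), which explains why it is exactly this quantity that appears, and that it is precisely the estimate needed to detect when two measures of very different total mass cannot be close — the mechanism driving the ``rival wins'' results in Theorems \ref{proposicioncon2N} and \ref{thm45}.
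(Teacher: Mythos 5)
Your proposal is correct and follows essentially the same route as the paper: reduce to positive measures via the Jordan decomposition and the mass identity, drop the nonnegative $W_1$ term, and exploit the constraint $|\tilde\mu|=|\tilde\nu|$ through triangle-type inequalities on the total variation. The only cosmetic difference is that you apply the reverse triangle inequality to each term separately and finish in $\mathbb{R}$, whereas the paper combines the two terms into $|\mu-\tilde\mu-\nu+\tilde\nu|$ and then evaluates at $X$; the content is the same.
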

\begin{proof}
It suffices to check the result for positive measures because
\[
|\mu(X) - \nu(X) | = | (\mu^+(X) + \nu^-(X) ) - (\nu^+(X) + \mu^-(X) ) |.
\]
Now, for $\mu, \nu \in \mathcal M(X)$,
 \begin{align*}
W^{1,1}_1 (\mu, \nu) &= \inf_{|\tilde{\mu}|= |\tilde{\nu}|} \left(|\mu - \tilde{\mu}| + |\nu - \tilde{\nu}| + W_1(\tilde{\mu}, \tilde{\nu})\right) \\
&\ge \inf_{|\tilde{\mu}|= |\tilde{\nu}|} \left(|\mu - \tilde{\mu}| + |\nu - \tilde{\nu}|\right) 
\\ &\ge \inf_{|\tilde{\mu}|= |\tilde{\nu}|} 
\left(|\mu - \tilde{\mu} -\nu + \tilde{\nu}|\right) \\&\ge \inf_{\tilde{\mu}(X)= \tilde{\nu}(X)} \left|
\left(\mu - \tilde{\mu} -\nu + \tilde{\nu}\right)(X) \right| = |\mu(X) - \nu(X) |. \qedhere
       \end{align*}




\end{proof}

\subsubsection{Case $f(N)\geq f(N-1)+2$}{\label{421}}
\mbox{}%

\medskip
We present a first result for the dynamic case under the hypothesis of the rival coffee shop complex growing a lot faster than ours. Precisely, we will suppose that $f(N) \ge f(N-1) + 2$. That is, whenever we place a shop, our rivals will place two or more.

\begin{thm}{\label{proposicioncon2N}}
    {\color{black} Let f: $\mathbb{N} \to \mathbb{N}$ } and $\mu_N= \left(\sum_{i=1}^N \delta_{x_i} -\sum_{j=1}^{f(N)} \delta_{y_j}\right)$. If $f(N) \geq f(N-1) + 2$, then, for $N_0$ big enough, the rival shops will {\color{black} have a winning strategy for all $N\geq N_0$, i.e., they can choose a sequence such that}
    \begin{equation}
\mathbf{W}_1^{1,1} \left(    \frac{1}{N+f(N)}\mu_N,dx \right)  > \mathbf{W}_1^{1,1} \left( \frac{1}{N+f(N)}(-\mu_N), dx \right).
    \end{equation}
\end{thm}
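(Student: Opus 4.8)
The strategy is to exploit the asymmetry between the number of positive and negative deltas in $\mu_N$. Writing $M_N := N + f(N)$ for the normalization factor, the total mass of $\frac{1}{M_N}\mu_N$ is $\frac{N - f(N)}{M_N}$, while the total mass of $\frac{1}{M_N}(-\mu_N)$ is $\frac{f(N) - N}{M_N}$; since $dx$ has mass $1$, Lemma \ref{cotainferiorwassers} gives
\[
\mathbf{W}_1^{1,1}\left(\frac{1}{M_N}\mu_N, dx\right) \ge \left| \frac{N - f(N)}{M_N} - 1 \right| = \frac{f(N) + f(N)}{M_N}\cdot\frac{1}{1} = \frac{2f(N)}{M_N},
\]
wait — more carefully, $\left|\frac{N-f(N)}{M_N} - 1\right| = \left|\frac{N - f(N) - N - f(N)}{M_N}\right| = \frac{2f(N)}{M_N}$ (for $N$ large enough that this is the relevant sign; in any case it equals $\frac{2f(N)}{M_N}$ since $f(N),N\ge 0$). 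So the rival's own quantity $\mathbf{W}_1^{1,1}(\frac{1}{M_N}\mu_N, dx)$ — which measures how well \emph{we} are distributed — is bounded \emph{below} by $\frac{2f(N)}{M_N}$, a quantity that does \emph{not} go to zero and in fact (since $f(N)\ge N-1+\text{(something growing)}$, more precisely $f$ grows at least linearly because $f(N)\ge f(N-1)+2$) stays bounded away from $0$; indeed $f(N) \ge f(N_{\min}) + 2(N - N_{\min})$, so $\frac{2f(N)}{M_N} = \frac{2f(N)}{N+f(N)} \to \frac{2\cdot(\text{slope}\ge 2)}{1 + (\text{slope}\ge 2)} \ge \frac{4}{3}$ in the limit, and is $>1$ eventually.

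Next I would produce a \emph{good} choice of rival sequence, i.e. an upper bound for $\mathbf{W}_1^{1,1}(\frac{1}{M_N}(-\mu_N), dx)$. The idea: let the rivals place their $f(N)$ shops $y_1,\dots,y_{f(N)}$ along a greedy sequence (in the sense of \eqref{greedy}), and let our shops $x_1,\dots,x_N$ be an arbitrary subset of that same greedy sequence disjoint from the $y_j$'s — or, since we only need an upper bound on the rival's distance and the rivals get to choose, simply bound using the triangle-inequality/decomposition trick from the proof of Theorem \ref{teoremauno}. Concretely, $-\mu_N = \sum_j \delta_{y_j} - \sum_i \delta_{x_i}$ has positive part $\sum_j \delta_{y_j}$ and negative part $\sum_i \delta_{x_i}$; choosing in the infimum defining $W_1^{1,1}((-\mu_N)_+/M_N, dx + (-\mu_N)_-/M_N)$ the competitors $\tilde\mu = \frac{1}{M_N}(\sum_j \delta_{y_j} + \sum_i \delta_{x_i})$ and $\tilde\nu = dx$ gives
\[
\mathbf{W}_1^{1,1}\left(\frac{1}{M_N}(-\mu_N), dx\right) \le \frac{2N}{M_N} + W_1\left(\frac{1}{M_N}\Big(\sum_{j=1}^{f(N)}\delta_{y_j} + \sum_{i=1}^N \delta_{x_i}\Big), dx\right),
\]
and if all $M_N$ points $y_j, x_i$ form an initial segment of a greedy sequence, Brown–Steinerberger's bound \eqref{cotasteinerberger} makes the $W_1$ term $\lesssim_X M_N^{-1/d}$, so the right side is $\le \frac{2N}{M_N} + O(M_N^{-1/d})$.

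Finally I would compare the two bounds. We need
\[
\frac{2f(N)}{M_N} \ (\text{lower bound for our side}) \ > \ \frac{2N}{M_N} + C_X M_N^{-1/d} \ (\text{upper bound for rival side}),
\]
i.e. $2f(N) - 2N > C_X M_N^{1 - 1/d}$. Since $f(N) - N \ge f(N-1) - (N-1) + 1$, the gap $f(N) - N$ is nondecreasing and, iterating, $f(N) - N \ge f(N_0) - N_0 + (N - N_0)$, so $f(N) - N$ grows at least linearly in $N$; meanwhile $M_N = N + f(N) \le 2f(N)$ grows linearly too, so $M_N^{1-1/d}$ is $o(N)$ (as $1 - 1/d < 1$). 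Hence the left side is $\Theta(N)$ and the right side is $o(N)$, and the inequality holds for all $N \ge N_0$ with $N_0$ large enough. This proves the rivals have a winning strategy eventually.

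\textbf{Main obstacle.} The delicate point is arranging that the rival's chosen configuration (the $y_j$) \emph{together with} our fixed $x_i$ can simultaneously be made into an initial segment of a single greedy sequence while keeping the sets disjoint — or, alternatively, decoupling the argument so that the upper bound on the rival side does not secretly require control over our points. The cleanest fix is to note that we do not need the $x_i$ to be greedy at all: in the displayed upper bound the term $W_1(\frac{1}{M_N}(\sum\delta_{y_j}+\sum\delta_{x_i}), dx)$ can instead be bounded by first moving $\sum\delta_{x_i}$ off to coincide with $N$ extra copies near the $y_j$ at cost $\le \operatorname{diam}(X)\cdot\frac{N}{M_N}$, reducing to $W_1$ of a greedy (or near-uniform) measure supported only where the rivals choose; this only worsens the constant in front of $\frac{N}{M_N}$, which is harmless since the dominant comparison is still linear-vs-sublinear. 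One should double-check the edge case where $f(N) - N$ is eventually constant — but $f(N) \ge f(N-1) + 2$ forces $f(N) - N \ge f(N-1) - (N-1) + 1$, strictly increasing, so this cannot happen; this is exactly where the hypothesis "$+2$" (rather than "$+1$") is used.
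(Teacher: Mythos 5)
Your lower bound is fine and matches the paper's: Lemma \ref{cotainferiorwassers} gives $\mathbf{W}_1^{1,1}(\tfrac{1}{M_N}\mu_N, dx) \ge \tfrac{2f(N)}{M_N}$ regardless of where any shops sit. The genuine gap is in your upper bound for the rival side. Your displayed bound needs the $W_1$ term to be controlled by the greedy estimate \eqref{cotasteinerberger}, which would require the \emph{combined} $M_N$ points $\{y_j\}\cup\{x_i\}$ to form a greedy sequence; the rival cannot arrange this, since the $x_i$ are adversarial, as you yourself note. Your proposed repair --- transporting the mass $\tfrac{N}{M_N}$ sitting on the $x_i$ onto the rival's points at cost at most $\diam(X)\cdot\tfrac{N}{M_N}$ --- is not harmless: that term is of the same order as the margin you are trying to beat. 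Concretely, $f(N)=2N$ is admissible (it satisfies $f(N)=f(N-1)+2$), and then your two bounds read
\[
\mathbf{W}_1^{1,1}\!\left(\tfrac{1}{3N}\mu_N, dx\right) \ge \tfrac{4}{3}, \qquad \mathbf{W}_1^{1,1}\!\left(\tfrac{1}{3N}(-\mu_N), dx\right) \le \tfrac{2}{3} + \tfrac{\diam(X)}{3} + o(1),
\]
so your comparison requires $\diam(X)<2$, which fails on a general manifold (normalizing $dx$ normalizes the volume, not the diameter). Equivalently, after multiplying through by $M_N$, the claim ``this only worsens the constant in front of $\tfrac{N}{M_N}$, which is harmless since the comparison is linear vs.\ sublinear'' is wrong: the repair adds a genuinely linear term $\diam(X)\,N$, and when $f(N)\sim 2N$ the constants decide the outcome. (A smaller slip: when $f$ grows superlinearly the left side is not $\Theta(N)$; the correct comparison is $f(N)-N\gtrsim f(N)$ against $M_N^{1-1/d}\lesssim f(N)^{1-1/d}$, which does work, but this is secondary to the gap above.)

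The paper circumvents exactly this difficulty by a different rival strategy: each time we open a shop, the rival places one delta \emph{exactly on top of it} (possible because $f(N)\ge f(N-1)+2$) and places the remaining $f(N)-N$ shops greedily. Then the copied deltas cancel inside $\mu_N$, so $-\mu_N=\sum_{j\in J}\delta_{y_j}$ is a positive measure supported on $f(N)-N$ greedy points, and choosing $\tilde\mu=\tfrac{1}{f(N)-N}\sum_{j\in J}\delta_{y_j}$, $\tilde\nu=dx$ gives $\mathbf{W}_1^{1,1}(\tfrac1{M_N}(-\mu_N),dx)\le \tfrac{2N}{M_N}+o(1)\le \tfrac23+o(1)<1\le \tfrac{2f(N)}{M_N}$, with no diameter term and no control needed over our points. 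Alternatively, your ``rival plays fully greedy, ignoring us'' strategy can be salvaged by a better choice in the infimum, namely $\tilde\mu=\tfrac1{M_N}\sum_j\delta_{y_j}$ and $\tilde\nu=\tfrac{f(N)}{M_N}\,dx$, which yields $\le\tfrac{2N}{M_N}+\tfrac{c}{f(N)^{1/d}}$ and again avoids transporting the $x_i$ altogether; but as written your argument does not establish the claim.
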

\
\begin{proof}
  The idea behind the proof  is that our rival has a winning strategy. That is, at least he is able to copy the placement of our $N$-th shop with one of his shops because in every turn he adds at least two shops by hypothesis. So, he can copy ours and settle other shops in the remaining non-occupied space in $X$. We present a formal computation of this explanation.

  Suppose our sequence of shops is given by $x_1, \ldots, x_N$. Then, following this procedure we would have \begin{equation}\label{sucesionteorema44}
  \begin{split}
      y_1 &= x_1, \\
  y_{f(1) + 1} &= x_2, 
  \\ \vdots& \\ y_{f(N-1)+1} &= x_N, \end{split}
  \end{equation} with every other $y_j$ filling the space in a greedy manner.

We would like to clarify two hidden implications before presenting the final inequalities. Firstly, our hypothesis clearly implies that $f(N) \ge 2N$. The other one is that we will call $J$ the set of indexes of $y_j$ that fills $X$, that is, the ones that do not copy the sequence $x_i$. It is a straightforward computation that the cardinality of $J$ is $|J| = f(N) - N$.

  Now, we are ready to finish our proof. On the one hand, choosing $\tilde{\mu} = \frac{1}{f(N)-N} \sum_{j\in J} \delta_{y_{j}}$ and $\tilde{\nu}=dx$ gives us
  \begin{align*}
        \mathbf{W}_1^{1,1} \left(  \frac{1}{N+f(N)} (- \mu),dx \right) &= \inf_{|\tilde{\mu}| = |\tilde{\nu}|} \left(\left|\frac{1}{N+f(N)} (- \mu) - \tilde{\mu} \right| + |dx - \tilde{\nu}| + W_1 \left(\tilde{\mu}, \tilde{\nu} \right)\right) \\
        &\le
        \left| \left( \frac{1}{f(N)+N} - \frac{1}{f(N)-N} \right) \sum_{j\in J} \delta_{y_{j}} \right|+ W_1 \left( \frac{1}{f(N)-N} \sum_{j\in J} \delta_{y_{j}}, dx \right) \\
        &\le
        \frac{2N}{f(N)+N} + \frac{c}{N^{d}}\le \frac{2}{3} +\frac{c}{N^d},
  \end{align*}due to Brown and Steinerberger results \cite[Theorems 1 \& 3]{brownsteinerberger}. On the other hand, using Lemma \ref{cotainferiorwassers}, 
\[
 \mathbf{W}_1^{1,1} \left(  \frac{1}{N+f(N)}  \mu,dx \right) \ge  1+ \frac{f(N) - N}{f(N)+N}=
 \frac{2f(N)}{N+ f(N)}
 \ge 1.
\]

Finally, we observe that for any manifold $X$ of dimension $d$ we can choose $N_0$ such that $\frac{c}{N^d} < \frac{1}{3}$ for all $N\ge N_0$ (we remind that $c>0$ is a constant depending only on the manifold $X$). Then, for $N\ge N_0$, we conclude  that 
\[
\mathbf{W}_1^{1,1} \left(  \frac{1}{N+f(N)}  (-\mu),dx \right) \le \frac{2}{3}+\frac{c}{N^d} < 1 \le \mathbf{W}_1^{1,1} \left(  \frac{1}{N+f(N)}  \mu,dx \right). \qedhere
\]
\end{proof}

  We imposed $f(N) \ge f(N-1) + 2$ for two reasons: on the one hand, this restriction implies $f(N) \ge 2N$. On the other hand, due to this we were able to explicitly define the winning sequence \eqref{sucesionteorema44}. Losing that clarity we can make a more general statement:
  
\begin{cor}
    The result also holds if $\liminf \frac{f(N)}{N} = \lambda > 1.$
\end{cor}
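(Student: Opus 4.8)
The plan is to reduce the statement $\liminf f(N)/N = \lambda > 1$ to the situation already handled in Theorem \ref{proposicioncon2N}, since the only two features of the hypothesis $f(N) \ge f(N-1)+2$ that were actually used are: (i) $f(N) \ge 2N$ eventually, which guarantees the rival can copy each of our shops and still have $f(N)-N$ ``free'' shops; and (ii) the explicit winning sequence \eqref{sucesionteorema44}. Under $\liminf f(N)/N = \lambda > 1$ we no longer get $f(N) \ge 2N$, but we do get, for any $\eps$ with $0 < \eps < \lambda - 1$, an index $N_0$ such that $f(N) \ge (1+\eps) N$ for all $N \ge N_0$. The point is that the rival still has $f(N) - N \ge \eps N$ free shops to spread around, and $\eps N \to \infty$, so the greedy estimate of Brown--Steinerberger still kicks in.

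First I would fix $\eps \in (0, \lambda-1)$ and pick $N_0$ with $f(N) \ge (1+\eps)N$ for $N \ge N_0$. Then I would have the rival play the same copying strategy as in \eqref{sucesionteorema44}: for each $i \le N$ set one $y_j$ equal to $x_i$, and let the remaining $|J| = f(N) - N \ge \eps N$ indices fill $X$ in a greedy manner. The lower bound side is unchanged: Lemma \ref{cotainferiorwassers} gives
\[
\mathbf{W}_1^{1,1}\left( \frac{1}{N+f(N)} \mu_N, dx \right) \ge 1 + \frac{f(N)-N}{f(N)+N} \ge 1 .
\]
For the upper bound, choosing $\tilde\mu = \frac{1}{f(N)-N}\sum_{j \in J}\delta_{y_j}$ and $\tilde\nu = dx$ exactly as before yields
\[
\mathbf{W}_1^{1,1}\left( \frac{1}{N+f(N)}(-\mu_N), dx \right) \le \frac{2N}{f(N)+N} + W_1\left( \frac{1}{f(N)-N}\sum_{j\in J}\delta_{y_j}, dx \right) .
\]
Now $\frac{2N}{f(N)+N} \le \frac{2N}{(2+\eps)N} = \frac{2}{2+\eps} < 1$, a constant strictly below $1$, and by \eqref{cotasteinerberger} applied to the greedy subsequence of cardinality $f(N)-N \ge \eps N$ we get $W_1(\cdots) \lesssim_X (f(N)-N)^{-1/d} \le (\eps N)^{-1/d} \to 0$. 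Hence for $N$ large enough the whole right-hand side is $\le \frac{2}{2+\eps} + \delta < 1$, where $\delta$ absorbs the vanishing greedy term, and this is strictly less than the lower bound $1$, which is the claimed inequality.

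The only subtlety worth flagging is that the greedy sequence $z_n = \arg\min_x \sum_{k<n} G(x,x_k)$ is a fixed recursively defined sequence, whereas here the ``free'' rival shops carry non-consecutive indices $j \in J$; one should note that what \eqref{cotasteinerberger} really controls is the empirical measure of the first $m$ terms of a greedy sequence, so the rival should designate its free shops to be precisely the first $f(N)-N$ terms of such a sequence, making the empirical measure $\frac{1}{f(N)-N}\sum_{j\in J}\delta_{y_j}$ literally of the form covered by Brown--Steinerberger. With that bookkeeping in place there is no real obstacle — the proof is a quantitative softening of Theorem \ref{proposicioncon2N}, trading the clean constant $2/3$ for $2/(2+\eps)$, which is still bounded away from $1$. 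I would end by remarking that one cannot push this to $\lambda = 1$: if $f(N) - N$ stays bounded the free-shop count no longer diverges and the argument collapses, consistent with the separate treatment of the case $f(N) = N + K$ in Theorem \ref{thm45}.
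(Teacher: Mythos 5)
Your argument is correct and takes essentially the same route as the paper's own proof: both reuse the two bounds from the proof of Theorem \ref{proposicioncon2N} (the mass lower bound via Lemma \ref{cotainferiorwassers} and the copy-plus-greedy upper bound) and then exploit that $\liminf f(N)/N=\lambda>1$ forces $f(N)\ge(1+\varepsilon)N$ for all large $N$, so that $\frac{2N}{f(N)+N}\le\frac{2}{2+\varepsilon}<1$ while the greedy remainder tends to zero. If anything, you are slightly more careful than the paper, which states the remainder as $c/N^{d}$ and leaves the quantitative consequence of the liminf hypothesis implicit.
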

\begin{proof}
    The assumption $\liminf \frac{f(N)}{N} = \lambda$ means that $f(N)$ will increase in a comparable way to $\lambda N$, so the rival shops will still be able to copy our locations and establish new ones in an optimal way (possibly at a slower rate that once every turn, if $\lambda < 2$).

    The bounds
    \[
    \mathbf{W}_1^{1,1} \left( \frac{1}{N+f(N)} (-\mu) , dx \right) \le \frac{2N}{f(N) + N} + \frac{c}{N^d}
    \]
    and
    \[
    \mathbf{W}_1^{1,1} \left( \frac{1}{N+f(N)} \mu , dx \right)  \ge 1 + \frac{f(N) - N}{f(N) + N}
    \]
    from the proof of Theorem \ref{proposicioncon2N} still applies. Bounding $f(N)\ge \lambda N - \varepsilon$ for $N$ big enough yields the result.
\end{proof}

\subsubsection{Case $f(N)= N+K$} 
\mbox{}%

\medskip

Now, we will suppose $f(N)= N + K$. That is, the rivals will set one shop every time we do, but they start with an advantage. In certain sense, we are growing at the same speed. It seems clear that, for big values of $N$ the two chains of shops will be in a very similar position. For that reason, we will study the situation for fixed $N$.

\begin{thm}{\label{thm45}}
  Let $\mu_N= \left(\sum_{i=1}^N \delta_{x_i} -\sum_{j=1}^{N+K} \delta_{y_j}\right)$, and $N_0>0$. Then, there exist values of $K$ such that the rival shops will have a winning strategy for all $N\le N_0$, i.e. 
    \begin{equation}
\mathbf{W}_1^{1,1} \left(    \frac{1}{2N+K}\mu_N,dx \right)  > \mathbf{W}_1^{1,1} \left( \frac{1}{2N+K}(-\mu_N), dx \right).
        \end{equation}
\end{thm}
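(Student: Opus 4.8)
The plan is to exploit the asymmetry introduced by the normalising factor $\frac{1}{2N+K}$. The signed measure $\frac{1}{2N+K}\mu_N$ has net mass $\frac{-K}{2N+K}$, so Lemma \ref{cotainferiorwassers} forces $\mathbf{W}_1^{1,1}\!\big(\frac{1}{2N+K}\mu_N,dx\big)>1$ no matter where the shops sit, whereas $-\frac{1}{2N+K}\mu_N$ has net mass $\frac{+K}{2N+K}$ and, if the rival spreads its $N+K$ shops well, can be pushed to within $o(1)$ of $dx$ once $K\gg N$. So the winning strategy for the rival is simply to place $y_j=z_j$ for $j=1,\dots,N+K$, where $z_n$ is a greedy sequence in the sense of \eqref{greedy}; a sufficiently large $K$ then beats any placement $\{x_i\}$ of our shops, uniformly over $N\le N_0$.

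First I would record the lower bound: since $\mu_N(X)=N-(N+K)=-K$, Lemma \ref{cotainferiorwassers} gives
\[
\mathbf{W}_1^{1,1}\!\left(\tfrac{1}{2N+K}\mu_N,dx\right)\ \ge\ \left|\tfrac{-K}{2N+K}-1\right|\ =\ \frac{2N+2K}{2N+K}\ =\ 1+\frac{K}{2N+K}\ >\ 1 .
\]
For the upper bound, write $\nu=\frac{1}{2N+K}\sum_{j=1}^{N+K}\delta_{z_j}$ and $\rho=\frac{1}{2N+K}\sum_{i=1}^{N}\delta_{x_i}$, so $-\frac{1}{2N+K}\mu_N=\nu-\rho$. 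Using the invariance of $\mathbf{W}_1^{1,1}$ under adding $\rho$ to both entries, together with the fact that $\mathbf{W}_1^{1,1}$ and $W_1^{1,1}$ agree on positive measures,
\[
\mathbf{W}_1^{1,1}(\nu-\rho,dx)=\mathbf{W}_1^{1,1}(\nu,dx+\rho)=W_1^{1,1}(\nu,dx+\rho).
\]
In the infimum defining $W_1^{1,1}$ I would take $\tilde\mu=\nu$ and $\tilde\nu=\frac{N+K}{2N+K}\,dx$: these have equal mass, $|\nu-\tilde\mu|=0$, and since $\tilde\nu\le dx\le dx+\rho$ we get $|(dx+\rho)-\tilde\nu|=\frac{2N}{2N+K}$, while $W_1(\tilde\mu,\tilde\nu)=\frac{N+K}{2N+K}\,W_1\!\big(\frac{1}{N+K}\sum_j\delta_{z_j},dx\big)$. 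By $W_1\le W_2$ and the greedy estimate \cite[Theorem 3]{brownsteinerberger}, the last factor is $\lesssim_X (N+K)^{-1/d}$, so
\[
\mathbf{W}_1^{1,1}\!\left(\tfrac{1}{2N+K}(-\mu_N),dx\right)\ \le\ \frac{2N}{2N+K}+C_X\,(N+K)^{-1/d},
\]
with $C_X>0$ depending only on $X$ and $d$.

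To finish, I would bound the right-hand side uniformly for $N\le N_0$ using $2N+K\ge K$ and $N+K\ge K$, getting
\[
\mathbf{W}_1^{1,1}\!\left(\tfrac{1}{2N+K}(-\mu_N),dx\right)\ \le\ \frac{2N_0}{K}+C_X\,K^{-1/d},
\]
which tends to $0$ as $K\to\infty$ (here $d\ge 3$); hence there is $K_0$ such that this is $<1$ for all $K\ge K_0$. Combined with the lower bound $>1$, any such $K$ yields the desired strict inequality for every $N\le N_0$ simultaneously. There is no genuine obstacle — the argument is just Lemma \ref{cotainferiorwassers} played against the greedy bound — but the two points I would be careful about are that the threshold for $K$ can be taken uniform in $N\le N_0$ (handled by the crude bounds $2N+K\ge K$ and $N+K\ge K$), and that possible coincidences $x_i=z_j$ cause no trouble (handled by routing through the invariance property of $\mathbf{W}_1^{1,1}$ rather than through the Jordan decomposition of $\nu-\rho$).
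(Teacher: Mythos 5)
Your proof is correct, but your rival's winning strategy differs from the paper's. The paper has the rival place $K$ shops greedily and then \emph{copy} every shop we open ($y_{n+K}=x_n$), so that the signed measure collapses by cancellation to $-\sum_{j=1}^K\delta_{y_j}$; after that, Lemma \ref{cotainferiorwassers} gives the lower bound $1+\tfrac{K}{2N+K}$ and the choice $\tilde\mu=\tfrac1K\sum_{j=1}^K\delta_{y_j}$, $\tilde\nu=dx$ together with the greedy estimate gives the upper bound $\tfrac{2N}{2N+K}+\tfrac{c}{K^{1/d}}$. You instead let the rival ignore our placements entirely and put all $N+K$ shops on a greedy sequence, then absorb our $N$ positive deltas via the translation-invariance $\mathbf{W}_1^{1,1}(\nu-\rho,dx)=\mathbf{W}_1^{1,1}(\nu,dx+\rho)$ and the choice $\tilde\mu=\nu$, $\tilde\nu=\tfrac{N+K}{2N+K}dx$ (using the homogeneity of $W_1$ under simultaneous scaling, which you should state but which is immediate from the definition). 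Your lower bound is the same application of Lemma \ref{cotainferiorwassers}, and is correctly observed to hold for \emph{any} placement since the net mass $-K$ is placement-independent. What your route buys: the rival's strategy is non-adaptive (no need to observe our shops, no distinctness assumption $x_i\neq y_j$), the bound is manifestly uniform over all placements $\{x_i\}$ and over $N\le N_0$, and the error term is $(N+K)^{-1/d}$ rather than $K^{-1/d}$. What the paper's route buys: the cancellation makes the computation more elementary (no invariance property or scaling needed) and it parallels the copying strategy of Theorem \ref{proposicioncon2N}, giving a unified treatment of the two dynamic cases. Both arguments establish the stated conclusion.
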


\begin{proof}
     In the same spirit as in Subsection \ref{421}, the rival has, at least, the strategy of choosing their first $K$ shops in a greedy manner, and then $y_{n+K} = x_n$ for all $n$. In that case, $\mu _N = -\sum_{j=1}^K \delta_{y_j}$.

    Now, using lemma \ref{cotainferiorwassers}
    we can see that
    \[
\mathbf{W}_1^{1,1} \left(    \frac{1}{2N+K}\mu_N,dx \right) \ge 1+ \frac{K}{2N+K}.
    \]
On the other hand, if we choose $\tilde{\mu} = \frac{1}{K} \sum_{j=1}^K \delta_{y_{j}}$ and $\tilde{\nu}=dx$, we obtain that
  
  \begin{align*}
        \mathbf{W}_1^{1,1} \left(  \frac{1}{2N+K} (- \mu),dx \right) &= \inf_{|\tilde{\mu}| = |\tilde{\nu}|} \left(|\mu - \tilde{\mu} | + |\nu - \tilde{\nu}| + W_1 \left(\tilde{\mu}, \tilde{\nu} \right)\right) \\
        &\le
        \left| \left( \frac{1}{2N+K} - \frac{1}{K} \right) \sum_{j\in J} \delta_{y_{j}} \right|+ W_1 \left( \frac{1}{K} \sum_{j= 1}^K \delta_{y_{j}}, dx \right) \\
        &\le
        \frac{2N}{2N+K} + \frac{c}{K^{d}} = 1 - \frac{K}{2N+K} + \frac{c}{K^d}.
  \end{align*}
In the last inequality we've applied the result of Brown and Steinerberger \cite{brownsteinerberger2} to the greedy sequence $y_1, \ldots, y_K$. We recall that $c$ is a positive constant which depends only on the manifold $X$.

Combining both inequalities, we have shown that whenever 
\begin{equation}\label{requisitoCasoK}
\frac{c}{K^d} \le \frac{2N}{2N+K},
\end{equation}
our result holds. And, by basic calculus, we know that for a fixed $N_0>0$ there exist a number $K_0>0$ such that for any $K\ge K_0$ and all $N\le N_0$, the inequality \ref{requisitoCasoK} is verified. 
\end{proof}

\appendix
\section{Discussion about the constants in the difference of Dirac deltas}{\label{appendixone}}

During the initial steps of this paper, the authors considered three different constants to multiply the difference of the Dirac deltas $\sum_{i=1}^{N_1}\delta_{x_i}-\sum_{j=1}^{N_2}\delta_{y_j}$. In this appendix, we present our considerations about the matter:

\medskip

\begin{enumerate}

\item \textbf{$\frac{1}{N_1}$ in the first term and $\frac{1}{N_2}$ in the second term}: 

\medskip

The main objection to this choice is that it gives different masses to the coffee shops of each team if $N_1\neq N_2$. From our perspective, this does not capture the essence of our problem, as we consider that all Coffee Shops (regardless of which team they belong to) have the same power of attraction and, formally, the same weight.

It would be interesting to consider the problem with different weights. For example, one of the teams could be a big consolidated coffee shop chain while the other team is composed of small ones. For that setting, this constant choice could possibly be appropriate.

\bigskip

\item \textbf{$\frac{1}{N_1-N_2}$ multiplying both factors}:

\medskip

The virtue of this constant is that it normalizes the measure and turns it into a probability measure. In addition, it gives the same weight to each Coffee Shop. It seems that the fixed case computations of this paper hold for this constant. The problem with this choice is the case $N_1=N_2$, which leads to division by zero. Therefore, this constant is not suitable for our problem.

\bigskip

\item \textbf{$\frac{1}{N_1+N_2}$ multiplying both factors}:

\medskip

This constant gives the correct weight to each Coffee Shop regardless of the team they belong to. It appeared when we tried to compute the optimality of all Coffee Shops against the volume measure regardless of which team they belong to. Moreover, as the denominator is always positive, we can use it for every $N_1$, $N_2>0$ and if we join the masses of the two companies it would result in $N_1+N_2$, i.e., the total population of stores. The combination of deltas may not be normalized, but that is not a problem after the generalization of the Wasserstein distance.

\end{enumerate}

    \nocite{*}
    \printbibliography

 \end{document}